\author{Hui Zhou\footnote{Corresponding author.} \footnote{School of Mathematical Sciences, Peking University, Beijing, 100871, P.~R.~China. \newline \indent \indent E-mail addresses: zhouhpku17@pku.edu.cn, huizhou@math.pku.edu.cn, zhouhlzu06@126.com}, Qi Ding\footnote{School of Mathematics and Statistics, Lanzhou University, Lanzhou, Gansu, 730000, P.~R.~China; and \newline \indent \indent Yonyou Network Technology Co., Ltd., Yonyou Software Park, No. 68 Beiqing Road, Haidian District, Beijing, 100094, P.~R.~China. \newline \indent \indent E-mail addresses: dingqi0@yonyou.com, 892127976@qq.com}~ and Ruiling Jia\footnote{Pure Mathematics, Faculty of Science, The PLA Information Engineering University, Zhengzhou, Henan, 450001, P.~R.~China. \newline \indent \indent E-mail addresses: jiarl09@163.com, 452684687@qq.com}}
\title{On distance matrices of graphs}
\def\j{{\mathbf{j}}}
\def\D{{\sf D}}
\def\LapExp{{\sf LapExp}}
\def\cof{{\rm cof}}
\def\adj{{\rm adj}}
\def\sub{{\sf sub}}
\def\BI{{\sf BI}}
\def\bi{{\sf bi}}
\def\d{{\sf d}}
\def\w{{\sf w}}
\def\DMI{{\sf DMI}}
\def\m{{\sf m}}
\newtheorem{theorem}{Theorem}[section]
\newtheorem{corollary}[theorem]{Corollary}
\newtheorem{lemma}[theorem]{Lemma}
\newtheorem{remark}[theorem]{Remark}
\newtheorem{definition}[theorem]{Definition}
\newtheorem{question}[theorem]{Question}
\newcommand*{\QEDA}{\hfill\ensuremath{\blacksquare}}  
\newenvironment{proof}[1][\hspace{2ex}\textbf{\textit{Proof}.}\hspace{1ex}]{\begin{trivlist}\item[\hskip \labelsep {\bfseries #1}]}{\QEDA\end{trivlist}}
\begin{document}

\maketitle

\begin{abstract}
Distance well-defined graphs consist of connected undirected graphs, strongly connected directed graphs and strongly connected mixed graphs. Let $G$ be a distance well-defined graph, and let $\D(G)$ be the distance matrix of $G$. Graham, Hoffman and Hosoya~\cite{Graham DM of a directed graph} showed a very attractive theorem, expressing the determinant of $\D(G)$ explicitly as a function of blocks of $G$. In this paper, we study the inverse of $\D(G)$ and get an analogous theory, expressing the inverse of $\D(G)$ through the inverses of distance matrices of blocks of $G$ (see Theorem~\ref{thm inverse of distance matrix by modified LapExp matrix}) by the theory of Laplacian expressible matrices which was first defined by the first author~\cite{Zhou2017DMdistance-wellDefined}. A weighted cactoid digraph is a strongly connected directed graph whose blocks are weighted directed cycles. As an application of above theory, we give the determinant and the inverse of the distance matrix of a weighted cactoid digraph, which imply Graham and Pollak's formula and the inverse of the distance matrix of a tree.
\end{abstract}

\textbf{Keywords}: Distance matrix; Determinant; Inverse matrix; Weighted cactoid digraph.

\textbf{MSC}: 15A15, 05C05.


\section{Introduction}\label{section Introduction}


Let $G$ be a distance well-defined graph. We use $V(G)$ and $E(G)$ to denote the vertex set and edge (arc) set of $G$ respectively. The \emph{distance} $\partial_G(u,v)$ from vertex $u$ to vertex $v$ in $G$ is the length (number of (directed) edges) of the shortest (directed) path from $u$ to $v$ in $G$. The \emph{distance matrix} $\D(G)$ of $G$ is a $|V(G)|\times |V(G)|$ square matrix whose $(u,v)$-entry is the distance $\partial_G(u,v)$, that is $\D(G)=(\partial_G(u,v))_{u,v\in V(G)}$. A \emph{cut vertex} of $G$ is a vertex whose deletion results in a disconnected graph. A \emph{block} of $G$ is a distance well-defined subgraph on at least two vertices such that it has no cut vertices and is maximal with respect to this property.

Let $\j$ be an appropriate size column vector whose entries are ones, let $\mathbf{0}$ be an appropriate size matrix whose entries are zeroes, and let $I$ be the identity matrix with an appropriate size. For any matrix $M$ and any column vector $\alpha$, we use $M^T$ and $\alpha^T$ to denote their transposes, respectively.

A square matrix $L$ is called a \emph{Laplacian-like matrix} if $L\j=\mathbf{0}$ and $\j^TL=\mathbf{0}$. This definition was first defined by the first author~\cite{Zhou2017DMdistance-wellDefined} to generalize the Laplacian matrix of an undirected graph. To study the inverse of the distance matrix of a distance well-defined graph, the first author~\cite{Zhou2017DMdistance-wellDefined} gave the definition of \emph{Laplacian expressible matrices}. A square matrix $D$ is \emph{left (or right) Laplacian expressible} if there exist a number $\lambda$, a column vector $\beta$ with $\beta^T\j=1$, and a square matrix $L$ such that \begin{center}$L\j=\mathbf{0}$, $\beta^TD=\lambda\j^T$ and $LD+I=\beta\j^T$\\ (or $\j^TL=\mathbf{0}$, $D\beta=\lambda\j$ and $DL+I=\j\beta^T$).\end{center} If the number $\lambda\neq 0$, then the Laplacian expressible matrix $D$ is invertible with $D^{-1}=-L+\frac{1}{\lambda}\beta\beta^T$ (see Lemma~\ref{lem inv of a matrix by lap}) and the matrix $L$ is Laplacian-like (see Lemma~\ref{lem Laplacian-like matrix}), which means the inverse $D^{-1}$ is expressed as the sum of a Laplacian-like matrix and a rank one matrix. This is why ``Laplacian expressible" comes. The main theory of~\cite{Zhou2017DMdistance-wellDefined} shows that if the distance matrix of each block of a graph is left (or right) Laplacian expressible, then the distance matrix of the graph is also left (or right) Laplacian expressible. This theory is very helpful for us to calculate the inverse of the distance matrix of a graph whose blocks correspond to Laplacian expressible matrices.

In this paper, we investigate the inverse of the distance matrix of a graph further and give a generalization of the Laplacian expressible theory in~\cite{Zhou2017DMdistance-wellDefined}. We give the following crucial definition (see also Definition~\ref{def modified Laplacian expressible matrix}). A square matrix $D$ is \emph{modified left (or right) Laplacian expressible} if there exist a number $\lambda$, column vectors $\alpha$ and $\beta$, and a square matrix $L$ such that \begin{center}$\alpha^T\j=1$, $L\j=\mathbf{0}$, $\alpha^TD=\lambda\j^T$ and $LD+I=\beta\j^T$\\ (or $\j^T\beta=1$, $\j^TL=\mathbf{0}$, $D\beta=\lambda\j$ and $DL+I=\j\alpha^T$).\end{center} By definition, we know the Laplacian expressible matrix is a special case of the modified Laplacian expressible matrix (when $\alpha=\beta$). Let $D$ be a modified left (or right) Laplacian expressible matrix as above. Suppose $\lambda\neq 0$ and $\j^T\beta=1\text{ (or $\alpha^T\j=1$)}$, then by Lemmas~\ref{lem inv of a matrix by lap} and \ref{lem Laplacian-like matrix}, the matrix $D$ is invertible, $L$ is Laplacian-like, and the inverse $D^{-1}=-L+\frac{1}{\lambda}\beta\alpha^T$ is expressed as the sum of a Laplacian-like matrix and a rank one matrix.

If the distance matrices of blocks of a graph $G$ are modified left (or right) Laplacian expressible, then we get the distance matrix of $G$ is also modified left (or right) Laplacian expressible (see Theorem~\ref{thm inverse of distance matrix by modified LapExp matrix}). So this modified Laplacian expressible property of a graph can retain from its blocks. This is helpful for us to calculate the inverse of the distance matrix of a graph whose blocks have modified Laplacian expressible distance matrices. This theory generalizes results in~\cite{Zhou2017DMdistance-wellDefined}, which first used Laplacian expressible matrices and got the inverses of distance matrices of the following graphs: trees, weighted trees, block graphs, odd-cycle-clique graphs, bi-block graphs, cactoid digraphs and complete multipartite graphs under some condition, etc (references are therein). But our theory can do more, such as arc weighted trees~\cite{Bapat bidirected tree,ZhouDing2016DMweightedTree}, mixed block graphs~\cite{ZhouDing2017MixedBlockGraphs}, weighted cactoid digraphs (see Theorem~\ref{thm weighted cactoid digraph}), etc. Theorem~\ref{thm weighted cactoid digraph} implies the corresponding results of weighted trees~\cite{Bapat bidirected tree,ZhouDing2016DMweightedTree} and cactoid digraphs~\cite{Hou Chen Inverse of cactoid graph}. 
For each of the above graphs, the inverse of its distance matrix can be expressed as the sum of a Laplacian-like matrix and a rank one matrix.

\section{The modified Laplacian expressible matrix}\label{section modified Laplacian expressible matrix}

First, we give a formula on the inverse of a square matrix.

\begin{lemma}\label{lem inv of a matrix by lap}
Let $D$ be an $n\times n$ matrix. Let $\lambda$ be an nonzero number, let $\alpha$ and $\beta$ be $n\times 1$ column vectors, and let $L$ be an $n\times n$ matrix. If either
\begin{enumerate}
\item $\alpha^TD=\lambda\j^T$ and $LD+I=\beta\j^T$, or
\item $D\beta=\lambda\j$ and $DL+I=\j\alpha^T$,
\end{enumerate}
then $D$ is invertible and $D^{-1}=-L+\frac{1}{\lambda}\beta\alpha^T$.
\end{lemma}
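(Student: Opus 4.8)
The plan is to treat the two cases symmetrically and, in each, first establish that $\lambda$ being nonzero forces $D$ to be invertible, and then verify by direct multiplication that $-L+\frac{1}{\lambda}\beta\alpha^T$ is the inverse. Consider case (1), where $\alpha^TD=\lambda\j^T$ and $LD+I=\beta\j^T$. First I would show $D$ is invertible: suppose $Dx=\mathbf{0}$ for some column vector $x$. Multiplying $LD+I=\beta\j^T$ on the right by $x$ gives $LDx+x=\beta\j^Tx$, so $x=\beta(\j^Tx)$, i.e.\ $x$ is a scalar multiple of $\beta$, say $x=c\beta$. Then from $\alpha^TD=\lambda\j^T$ we get $\alpha^TDx=\lambda\j^Tx=\lambda\j^T(c\beta)=c\lambda(\j^T\beta)$; but also $\alpha^TDx=\alpha^T\mathbf{0}=0$. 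I will need the scalar $\j^T\beta$ here, so the cleanest route is instead: from $x=c\beta$ and $Dx=\mathbf{0}$ we have $cD\beta=\mathbf{0}$; left-multiplying $\alpha^TD=\lambda\j^T$ would not directly help, so better to use $x=c\beta$ together with $\j^Tx=\j^T(c\beta)=c(\j^T\beta)$ and the relation $x=\beta(\j^Tx)$ recursively: applying $\j^T$ to $x=\beta(\j^Tx)$ gives $\j^Tx=(\j^T\beta)(\j^Tx)$, so $(\j^Tx)(1-\j^T\beta)=0$; then either $\j^Tx=0$, whence $x=\beta\cdot 0=\mathbf{0}$, or $\j^T\beta=1$, in which case $\lambda\j^Tx=\alpha^TDx=0$ gives $\j^Tx=0$ since $\lambda\neq0$, again $x=\mathbf{0}$. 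Hence $D$ is injective, and being square it is invertible.

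Having invertibility, I would then compute $D^{-1}=D^{-1}(\beta\j^T-LD)\cdot$\dots\ — more directly, from $LD+I=\beta\j^T$ right-multiply by $D^{-1}$ to get $L+D^{-1}=\beta\j^TD^{-1}$. It remains to identify $\j^TD^{-1}$. From $\alpha^TD=\lambda\j^T$, right-multiplying by $D^{-1}$ gives $\alpha^T=\lambda\j^TD^{-1}$, so $\j^TD^{-1}=\frac{1}{\lambda}\alpha^T$ (using $\lambda\neq0$). Substituting, $L+D^{-1}=\beta\cdot\frac{1}{\lambda}\alpha^T$, i.e.\ $D^{-1}=-L+\frac{1}{\lambda}\beta\alpha^T$, as claimed. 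Case (2) is entirely analogous by transposing the roles of left and right multiplication: from $DL+I=\j\alpha^T$ one shows $D$ is invertible (if $x^TD=\mathbf{0}$ then $x^T=\alpha(\dots)$\,— applied on the left — forcing $x=\mathbf{0}$ by the same $(1-\j^T\beta)$ dichotomy together with $D\beta=\lambda\j$), and then left-multiplying $DL+I=\j\alpha^T$ by $D^{-1}$ gives $L+D^{-1}=D^{-1}\j\alpha^T$, while $D\beta=\lambda\j$ yields $D^{-1}\j=\frac{1}{\lambda}\beta$, so again $D^{-1}=-L+\frac{1}{\lambda}\beta\alpha^T$.

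The only subtle point — the part I expect to be the main obstacle — is the invertibility argument, because the hypotheses do not a priori supply the normalization $\j^T\beta=1$ (that extra condition is imposed later, in the application to the inverse formula for modified Laplacian expressible matrices, not in this lemma). The dichotomy ``either $\j^Tx=0$ or $\j^T\beta=1$'' handles this cleanly: in the first branch one concludes directly, and in the second branch one falls back on $\lambda\neq0$ together with $\alpha^TD=\lambda\j^T$ (resp.\ $D\beta=\lambda\j$) to kill $\j^Tx$ anyway. Once injectivity of the square matrix $D$ is secured, everything else is a short sequence of right/left multiplications by $D^{-1}$ and does not require any further hypothesis.
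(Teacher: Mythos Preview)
Your argument is correct, but it takes a longer route than the paper's. The paper never separately proves invertibility; instead it observes that from $\alpha^TD=\lambda\j^T$ one has $\beta\j^T=\frac{1}{\lambda}\beta\alpha^TD$, substitutes this into $LD+I=\beta\j^T$, and reads off $(-L+\frac{1}{\lambda}\beta\alpha^T)D=I$ directly. For a square matrix this exhibits a left inverse and hence settles both invertibility and the formula in one line.

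Your approach --- kernel argument for injectivity, then right-multiply by $D^{-1}$ --- is sound, but the invertibility step you flag as ``the main obstacle'' is in fact not an obstacle at all, and your dichotomy on $\j^T\beta$ is unnecessary. Once $Dx=\mathbf{0}$, the hypothesis $\alpha^TD=\lambda\j^T$ already gives $\lambda\j^Tx=\alpha^TDx=0$, hence $\j^Tx=0$ since $\lambda\neq 0$; then $x=\beta(\j^Tx)=\mathbf{0}$ immediately, with no case split required. So what you regard as the subtle point dissolves, and after that your computation of $D^{-1}$ via $\j^TD^{-1}=\frac{1}{\lambda}\alpha^T$ is essentially the paper's substitution in reverse. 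The paper's route is shorter; yours makes the injectivity explicit but at the cost of an avoidable detour.
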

\begin{proof}The proofs of the two cases are similar, so we only give the proof of the first case. Suppose $\alpha^TD=\lambda\j^T$ and $LD+I=\beta\j^T$. Then $\beta\alpha^TD=\lambda\beta\j^T$ and $\beta\j^T=\frac{1}{\lambda}\beta\alpha^TD$. So $LD+I=\frac{1}{\lambda}\beta\alpha^TD$ and $(-L+\frac{1}{\lambda}\beta\alpha^T)D=I$. Thus $D^{-1}=-L+\frac{1}{\lambda}\beta\alpha^T$.
\end{proof}

Now we give some sufficient conditions for a square matrix $L$ to be a Laplacian-like matrix.

\begin{lemma}\label{lem Laplacian-like matrix}
Let $D$ be an $n\times n$ invertible matrix. Let $\lambda$ be an nonzero number, let $\alpha$ and $\beta$ be $n\times 1$ column vectors, and let $L=-D^{-1}+\frac{1}{\lambda}\beta\alpha^T$. Then $L$ is a Laplacian-like matrix if one of the following conditions holds:
\begin{enumerate}
\item $\alpha^TD=\lambda\j^T$, $\j^T\beta=1$ and $L\j=\mathbf{0}$,
\item $D\beta=\lambda\j$, $\alpha^T\j=1$ and $\j^T L=\mathbf{0}$,
\item $\alpha^TD=\lambda\j^T$, $D\beta=\lambda\j$ and $\j^T\beta=\alpha^T\j=1$.\label{case Laplacian-like 3}
\end{enumerate}
\end{lemma}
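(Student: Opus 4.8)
We need to show $L\mathbf{j}=\mathbf{0}$ and $\mathbf{j}^TL=\mathbf{0}$ in each case. In each case one of these two equations is given as a hypothesis, so the work is to derive the other one. The plan is to use the two "algebraic" hypotheses — one of the form $\alpha^TD=\lambda\mathbf{j}^T$ or $D\beta=\lambda\mathbf{j}$, together with the normalization $\mathbf{j}^T\beta=1$ or $\alpha^T\mathbf{j}=1$ — to pin down how $L=-D^{-1}+\frac{1}{\lambda}\beta\alpha^T$ acts on $\mathbf{j}$ on the missing side. Throughout I would freely use that $D$ is invertible, so that $\alpha^TD=\lambda\mathbf{j}^T$ is equivalent to $\alpha^T=\lambda\mathbf{j}^TD^{-1}$, and similarly $D\beta=\lambda\mathbf{j}$ is equivalent to $\beta=\lambda D^{-1}\mathbf{j}$.

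For case~(1): we are given $\alpha^TD=\lambda\mathbf{j}^T$, $\mathbf{j}^T\beta=1$, and $L\mathbf{j}=\mathbf{0}$; we must prove $\mathbf{j}^TL=\mathbf{0}$. Compute
\[
\mathbf{j}^TL=-\mathbf{j}^TD^{-1}+\tfrac{1}{\lambda}\mathbf{j}^T\beta\,\alpha^T=-\mathbf{j}^TD^{-1}+\tfrac{1}{\lambda}\alpha^T,
\]
using $\mathbf{j}^T\beta=1$. From $\alpha^TD=\lambda\mathbf{j}^T$ and invertibility of $D$ we get $\tfrac{1}{\lambda}\alpha^T=\mathbf{j}^TD^{-1}$, so $\mathbf{j}^TL=\mathbf{0}$. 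Case~(2) is the exact transpose/mirror of case~(1): we are given $D\beta=\lambda\mathbf{j}$, $\alpha^T\mathbf{j}=1$, and $\mathbf{j}^TL=\mathbf{0}$, and the symmetric computation
\[
L\mathbf{j}=-D^{-1}\mathbf{j}+\tfrac{1}{\lambda}\beta\,\alpha^T\mathbf{j}=-D^{-1}\mathbf{j}+\tfrac{1}{\lambda}\beta=\mathbf{0},
\]
where the last equality uses $\beta=\lambda D^{-1}\mathbf{j}$, obtained from $D\beta=\lambda\mathbf{j}$.

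For case~(3): we are given both $\alpha^TD=\lambda\mathbf{j}^T$ and $D\beta=\lambda\mathbf{j}$, and both normalizations $\mathbf{j}^T\beta=\alpha^T\mathbf{j}=1$, but neither $L\mathbf{j}=\mathbf{0}$ nor $\mathbf{j}^TL=\mathbf{0}$ is assumed outright. However, $\alpha^TD=\lambda\mathbf{j}^T$ together with $\mathbf{j}^T\beta=1$ gives $\tfrac{1}{\lambda}\alpha^T=\mathbf{j}^TD^{-1}$ as above, hence the computation of case~(1) still yields $\mathbf{j}^TL=\mathbf{0}$; and $D\beta=\lambda\mathbf{j}$ together with $\alpha^T\mathbf{j}=1$ gives $\beta=\lambda D^{-1}\mathbf{j}$, hence the computation of case~(2) still yields $L\mathbf{j}=\mathbf{0}$. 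So case~(3) follows by running both of the previous arguments. There is no serious obstacle here; the only point requiring a little care is bookkeeping which normalization pairs with which algebraic identity so that the rank-one term $\tfrac{1}{\lambda}\beta\alpha^T$ collapses correctly against $\mathbf{j}$ on the appropriate side — in particular, noticing that in case~(3) the two hypotheses are exactly what is needed to make the two one-sided conclusions hold simultaneously, so that no independent assumption on $L\mathbf{j}$ or $\mathbf{j}^TL$ is needed.
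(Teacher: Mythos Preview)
Your proof is correct and follows essentially the same approach as the paper: the paper only writes out case~(2), computing $L\j=-D^{-1}\j+\frac{1}{\lambda}\beta\alpha^T\j=-\frac{1}{\lambda}\beta+\frac{1}{\lambda}\beta=\mathbf{0}$ from $D\beta=\lambda\j$ and $\alpha^T\j=1$, and declares the other cases similar. Your treatment simply spells out all three cases explicitly, with the same one-line computations.
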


\begin{proof}
The proofs are similar, so we only give the proof of the second case. Since $D\beta=\lambda\j$, we have $D^{-1}\j=\frac{1}{\lambda}\beta$. Thus $L\j=-D^{-1}\j+\frac{1}{\lambda}\beta\alpha^T\j=-\frac{1}{\lambda}\beta+\frac{1}{\lambda}\beta=\mathbf{0}$. Hence $L$ is a Laplacian-like matrix.
\end{proof}


For ease of reference, we repeat the following definition.

\begin{definition}[Modified Laplacian expressible matrix]\label{def modified Laplacian expressible matrix}
Let $D$ be an $n\times n$ matrix. Let $\lambda$ be a number, let $\alpha$ and $\beta$ be $n\times 1$ column vectors, and let $L$ be an $n\times n$ matrix. If \begin{center}$\alpha^T\j=1$, $L\j=\mathbf{0}$, $\alpha^TD=\lambda\j^T$ and $LD+I=\beta\j^T$,\end{center} then we call $D$ a modified left Laplacian expressible matrix, or \begin{center}a left $\LapExp^*(\lambda,\alpha,\beta,L)$ matrix\end{center} to specify the corresponding parameters $\lambda,\alpha,\beta,L$. If \begin{center}$\j^T\beta=1$, $\j^TL=\mathbf{0}$, $D\beta=\lambda\j$ and $DL+I=\j\alpha^T$,\end{center} then we call $D$ a modified right Laplacian expressible matrix, or \begin{center}a right $\LapExp^*(\lambda,\alpha,\beta,L)$ matrix\end{center} to specify the corresponding parameters $\lambda,\alpha,\beta,L$. The matrix $D$ is called a modified Laplacian expressible matrix, if either $D$ is a modified left Laplacian expressible matrix, or $D$ is a modified right Laplacian expressible matrix.
\end{definition}

\begin{remark}\label{remark modified LapExp matrix}
The above Definition~\ref{def modified Laplacian expressible matrix} is also suitable and the above lemmas in this section are also true if $\lambda$ and the entries of vectors and matrices are taken from a commutative ring with identity. In this case, the condition ``$\lambda\neq 0$" should be changed to ``$\lambda$ is invertible".
\end{remark}

Let $D$ be a left (or right) $\LapExp^*(\lambda,\alpha,\beta,L)$ matrix, and suppose $\lambda\neq 0$. Then by Lemma~\ref{lem inv of a matrix by lap}, the matrix $D$ is invertible and $D^{-1}=-L+\frac{1}{\lambda}\beta\alpha^T$. If we assume $\j^T\beta=1$ (or $\alpha^T\j=1$) additionally, then by Lemma~\ref{lem Laplacian-like matrix}, the matrix $L$ is a Laplacian-like matrix, and so the inverse matrix $D^{-1}$ can be expressed as the sum of a Laplacian-like matrix and a rank one matrix. This is why we call the matrix $D$ a modified Laplacian expressible matrix. 


\section{Inverse of the generalized distance matrix}\label{section Inverse of generalized distance matrix}


\begin{definition}[Generalized distance matrix]\label{def Generalised distance matrix}
Let $G$ be a distance well-defined graph. A generalized distance matrix $D$ of $G$ is a $|V(G)|\times |V(G)|$ matrix $(D_{uv})_{u,v\in V(G)}$ whose entries are taken from a commutative ring with identity and satisfy the following conditions:
\begin{enumerate}
\item $D_{uu}=0$ for all $u\in V(G)$, and
\item if $u$ and $v$ are two vertices of $G$ such that every shortest (directed) path from $u$ to $v$ passes through the cut-vertex $x$, then $D_{uv}=D_{ux}+D_{xv}$.
\end{enumerate}
\end{definition}

Let $G$ be a distance well-defined graph with $n$ vertices. The distance matrix of $G$ is actually a generalized distance matrix. Let $G_1,G_2,\ldots,G_r$ be all the blocks of $G$ where $r\geqslant 1$. For each $1\leqslant i\leqslant r$, let $n_i\geqslant 2$ be the number of vertices of $G_i$. We call $(G^n;G_1^{n_1},G_2^{n_2},\ldots,G_r^{n_r})$ the \emph{structure parameters} of $G$. Note that \begin{equation*}n-1=\sum\limits_{i=1}^r(n_i-1).\end{equation*} An \emph{$n$-bag} is a tuple $(A,\lambda,\alpha,\beta,L)$ consisting of $n\times n$ matrices $A$ and $L$, a number $\lambda$ and $n\times 1$ column vectors $\alpha$ and $\beta$. An $n$-bag $(A,\lambda,\alpha,\beta,L)$ is called a \emph{left (or right) $\LapExp^*$ $n$-bag} if $A$ is a left (or right) $\LapExp^*(\lambda,\alpha,\beta,L)$ matrix. Let $M$ be an $n\times n$ matrix whose rows and columns are indexed by vertices of $G$, and let $H$ be a subgraph of $G$. We use $\sub(M;G,H)$ to denote the submatrix of $M$ whose rows and columns are corresponding to the vertices of $H$. Let $v$ be a vertex of $G$. The \emph{block index set} $\BI_G(v)$ of $v$ is the set of all indices $k$, $1\leqslant k\leqslant r$, satisfying that $v$ is a vertex of the block $G_k$. The \emph{block index} of $v$ is the cardinality $\bi_G(v)=\#\BI_G(v)$. The \emph{block index} of $G$ is $\bi(G)=\sum\limits_{v\in V(G)}(\bi_G(v)-1)$. By Lemma~4.2 in~\cite{Zhou2017DMdistance-wellDefined}, we have \begin{equation}\label{eqn bi(G)=r-1} \bi(G)=r-1.\end{equation} So \begin{equation*}n+\bi(G)=\sum\limits_{i=1}^rn_i.\end{equation*}



\begin{definition}[Composition bag]\label{def composition of bags}
Let $G$ be a distance well-defined graph with structure parameters $(G^n;G_1^{n_1},G_2^{n_2},\ldots,G_r^{n_r})$. Let $D$ be a generalized distance matrix of $G$. For each $1\leqslant i\leqslant r$, let $D_i=\sub(D;G,G_i)$ and let $B_i=(D_i,\lambda_i,\alpha_i,\beta_i,L_i)$ be an $n_i$-bag. The composition bag of bags $B_1,B_2,\ldots,B_r$ is an $n$-bag $(D,\lambda,\alpha,\beta,L)$ whose parameters are defined as follows:
\begin{eqnarray*}
\lambda & = & \sum\limits_{i=1}^r\lambda_i,\\
\alpha_v & = & \sum\limits_{i\in \BI_G(v)}(\alpha_i)_v-\bi_G(v)+1,\\
\beta_v & = & \sum\limits_{i\in \BI_G(v)}(\beta_i)_v-\bi_G(v)+1,\\
L & = & \sum\limits_{i=1}^r\hat{L_i},
\end{eqnarray*}
where
\begin{enumerate}
\item for any vertex $v$ of $G$, the entry of $\alpha$ corresponding to $v$ is $\alpha_v$ and the entry of $\beta$ corresponding to $v$ is $\beta_v$, and
\item for each $1\leqslant i\leqslant r$, $\hat{L_i}$ is an $n\times n$ matrix such that the entry of $\hat{L_i}$ corresponding to vertices $u$ and $v$ of $G$ is \begin{center}$(\hat{L_i})_{uv}=\left\{\begin{array}{cl} (L_i)_{uv}, & \text{if }u,v\in V(G_i),\\ 0, & \text{otherwise}.\end{array}\right.$\end{center}
\end{enumerate}
\end{definition}

Note that in the above definition, for any vertex $v$ of $G$, we can write
\begin{eqnarray*}
\alpha_v-1 & = & \sum\limits_{i\in \BI_G(v)}\Bigl((\alpha_i)_v-1\Bigr),\\
\beta_v-1 & = & \sum\limits_{i\in \BI_G(v)}\Bigl((\beta_i)_v-1\Bigr);
\end{eqnarray*}
this means $\alpha_v$ (or $\beta_v$) depends only on entries $(\alpha_i)_v$ $\Bigl($or $(\beta_i)_v\Bigr)$ with $i\in \BI_G(v)$.

In general, we get the following theorem on the generalized distance matrix of a distance well-defined graph whose blocks are corresponding to (left or right) $\LapExp^*$ bags.


\begin{theorem}\label{thm inverse of distance matrix by modified LapExp matrix}
Let $G$ be a distance well-defined graph with structure parameters $(G^n;G_1^{n_1},G_2^{n_2},\ldots,G_r^{n_r})$. Let $D$ be a generalized distance matrix of $G$. For each $1\leqslant i\leqslant r$, let $D_i=\sub(D;G,G_i)$ and let $B_i=(D_i,\lambda_i,\alpha_i,\beta_i,L_i)$ be a left (or right) $\LapExp^*$ $n_i$-bag. Let $B=(D,\lambda,\alpha,\beta,L)$ be the composition bag of bags $B_1,B_2,\ldots,B_r$. Then \begin{center}$\alpha^T\j=1$, $L\j=\mathbf{0}$, $\alpha^TD=\lambda\j^T$ and $LD+I=\beta\j^T$\\ (or $\j^T\beta=1$, $\j^TL=\mathbf{0}$, $D\beta=\lambda\j$ and $DL+I=\j\alpha^T$).\end{center} As a consequence, $B=(D,\lambda,\alpha,\beta,L)$ is a left (or right) $\LapExp^*$ $n$-bag. Furthermore, if $\lambda$ is invertible, then $D$ is invertible and $D^{-1}=-L+\frac{1}{\lambda}\beta\alpha^T$.
\end{theorem}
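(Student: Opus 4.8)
The plan is to induct on the number of blocks $r$, reducing the general case to the situation where $G$ has a single cut vertex shared by exactly two blocks, and then to verify each of the four identities directly from the defining relations of the two constituent bags together with the additivity property of a generalized distance matrix. I will treat only the left case; the right case is symmetric (or follows by transposition). The base case $r=1$ is trivial since then $B=B_1$. For the inductive step, pick a cut vertex $x$ of $G$ and split the block tree at $x$: write $G=G'\cup G''$ where $G'$ and $G''$ share only the vertex $x$, each is a union of a proper subcollection of the blocks, and by the inductive hypothesis the composition bags $B'=(D',\lambda',\alpha',\beta',L')$ and $B''=(D'',\lambda'',\alpha'',\beta'',L'')$ of the respective sub-collections satisfy the four conclusions relative to $G'$ and $G''$. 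Using the compatibility of the composition operation (the parameters of $B$ are obtained by first composing within $G'$ and within $G''$, then composing the two results — this follows because the correction terms $-\bi_G(v)+1$ and the disjoint supports of the $\hat L_i$ split cleanly along the partition of blocks), it suffices to prove the theorem in the two-block case $G=G'\cup G''$ with $V(G')\cap V(G'')=\{x\}$.

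So assume $r=2$ in spirit: $D$ decomposes in block form with overlap only at the row/column indexed by $x$. For the two identities $\alpha^T\j=1$ and $L\j=\mathbf 0$ I expect a short direct computation: $\alpha^T\j=\sum_v\alpha_v=\sum_v\bigl(1+\sum_{i\in\BI_G(v)}((\alpha_i)_v-1)\bigr)=n+\sum_{i=1}^r\bigl((\alpha_i)^T\j-n_i\bigr)=n+\sum_i(1-n_i)=n+(1-\sum_i n_i)=1$ by the identity $n+\bi(G)=\sum n_i$ and \eqref{eqn bi(G)=r-1}; and $L\j=\sum_i\hat L_i\j=\sum_i\widehat{L_i\j}=\mathbf 0$ since each $L_i\j=\mathbf 0$ and zero-padding commutes with this. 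The substantive identities are $\alpha^TD=\lambda\j^T$ and $LD+I=\beta\j^T$. For a fixed column vertex $w$, lying (say) in $G'$, the key structural fact is that for any vertex $u$ the distance $D_{uw}$ satisfies $D_{uw}=D_{ux}+D_{xw}$ whenever $u\notin V(G')$ (every shortest path from $u$ to $w$ must pass through $x$, so condition (2) of Definition~\ref{def Generalised distance matrix} applies), while for $u\in V(G')$ it is just $(D')_{uw}$. Splitting the sum $\alpha^TD\,e_w=\sum_u\alpha_u D_{uw}$ over $V(G')$ and $V(G)\setminus V(G')$, substituting this decomposition, and regrouping so that the ``tail'' contributions $\sum_{u\notin V(G')}\alpha_u(D_{ux}+D_{xw})$ reorganize using $\alpha''$ and the column identity at $x$ in $G''$, should collapse to $\lambda'+\lambda''=\lambda$; the only delicate point is bookkeeping at $x$ itself, where the entry $\alpha_x$ is split between its $G'$-part and $G''$-part exactly so that the double-counting cancels.

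The analogous but more involved computation proves $LD+I=\beta\j^T$. Here I would evaluate the $(v,w)$ entry of $LD$ as $\sum_u L_{vu}D_{uw}=\sum_i\sum_{u\in V(G_i)}(L_i)_{vu}D_{uw}$, note that $L_{vu}=0$ unless $v$ and $u$ lie in a common block, so $v$ fixes the block (or blocks) $G_i$ with $v\in V(G_i)$ that contribute, and then within each such block use the distance-additivity to write $D_{uw}$ as $(D_i)_{u\cdot}$ applied to a vertex of $G_i$ on the shortest path to $w$ — concretely, if $w\in V(G_i)$ one gets $(D_i)_{uw}$ directly, and if $w\notin V(G_i)$ then all of $V(G_i)$ routes to $w$ through the cut vertex $y_i$ of $G_i$ nearest to $w$, so $D_{uw}=(D_i)_{uy_i}+D_{y_iw}$. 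Plugging in $L_iD_i+I=\beta_i\j^T$ for each block turns the block-$i$ contribution into a multiple of $\j^T$ plus the identity correction, and summing over $i\in\BI_G(v)$, using the definition $\beta_v=1+\sum_{i\in\BI_G(v)}((\beta_i)_v-1)$ to absorb the $-1$'s from the $+I$ terms on the diagonal, yields $\beta_v$ in the $(v,w)$ slot for every $w$ — i.e. $LD+I=\beta\j^T$. The main obstacle, and where I would spend the most care, is precisely this last bookkeeping: tracking how the ``$+I$'' diagonal corrections from the $\bi_G(v)$ different blocks containing $v$ add up to a single $+I$ with the shifts $-\bi_G(v)+1$ landing exactly in $\beta_v$, and verifying that the cross-block distance decompositions are internally consistent (no shortest path is decomposed two incompatible ways). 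Once the two-block case is nailed down with this level of care, the induction closes and the final sentence — invertibility and the formula $D^{-1}=-L+\frac1\lambda\beta\alpha^T$ when $\lambda$ is invertible — is immediate from Lemma~\ref{lem inv of a matrix by lap}.
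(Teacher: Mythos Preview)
Your proposal is correct and follows essentially the same inductive approach as the paper: induct on $r$, split $G$ at a cut vertex into two pieces whose bags already satisfy the conclusions, and verify the four identities by direct block-matrix computation using the additivity property of the generalized distance matrix. The paper's only simplification is that it always peels off a single \emph{leaf block} $H=G_r$ rather than splitting at an arbitrary cut vertex, so one piece is a given $\LapExp^*$ bag by hypothesis and your associativity-of-composition lemma is never needed; note also the slip $n+(1-\sum_i n_i)$, which should read $n+(r-\sum_i n_i)$.
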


\begin{proof}
The proofs of left and right cases are similar. Here we only give the proof of the left case. For each $1\leqslant i\leqslant r$, the bag $B_i=(D_i,\lambda_i,\alpha_i,\beta_i,L_i)$ is a left $\LapExp^*$ $n_i$-bag, i.e. \begin{center}$\alpha_i^T\j=1$, $L_i\j=\mathbf{0}$, $\alpha_i^TD_i=\lambda_i\j^T$ and $L_iD_i+I=\beta_i\j^T$.\end{center} By counting in two ways, we have \begin{center}$r=\sum\limits_{i=1}^r\alpha_i^T\j=\sum\limits_{i=1}^r\sum\limits_{v\in V(G_i)}(\alpha_i)_v=\sum\limits_{v\in V(G)}\sum\limits_{i\in \BI_G(v)}(\alpha_i)_v$.\end{center} Then by Equation~(\ref{eqn bi(G)=r-1}), we get
\begin{eqnarray*}
\alpha^T\j=\sum\limits_{v\in V(G)}\alpha_v&=&\sum\limits_{v\in V(G)}\left(\sum\limits_{i\in \BI_G(v)}(\alpha_i)_v-\bi_G(v)+1\right)\\
&=&\sum\limits_{v\in V(G)}\sum\limits_{i\in \BI_G(v)}(\alpha_i)_v-\sum\limits_{v\in V(G)}\Bigl(\bi_G(v)-1\Bigr)\\
&=&r-\bi(G)=1.
\end{eqnarray*}
For each $1\leqslant i\leqslant r$, since $L_i\j=\mathbf{0}$, we get $\hat{L_i}\j=\mathbf{0}$. By definition, we have $L\j=\sum\limits_{i=1}^r\hat{L_i}\j=\mathbf{0}$.

We will prove $\alpha^TD=\lambda\j^T$ by induction on the block number $r$. The case $r=1$ is clear. Now let $r \geqslant 2$. Without loss of generality, we may assume $H=G_r$ is a leaf block of $G$ with separating vertex $x_0$. Let $B_H=(D_H,\lambda_H,\alpha_H,\beta_H,L_H)=B_r$. Then $\alpha_H^T\j=1$ and $\alpha_H^TD_H=\lambda_H\j^T$. Let $F=G-(H-x_0)$. Then the blocks of $F$ are $G_1,G_2,\ldots,G_{r-1}$. Let $D_F=\sub(D;G,F)$. Then $D_F$ is a generalized distance matrix of $F$, and $D_i=\sub(D_F;F,G_i)$ for $1\leqslant i\leqslant r-1$. Let $B_F=(D_F,\lambda_F,\alpha_F,\beta_F,L_F)$ be the composition bag of $B_1,\ldots,B_{r-1}$. Then $\alpha_F^T\j=1$ by the first paragraph of this proof, and $\alpha_F^TD_F=\lambda_F\j^T$ by the induction hypothesis.

Assume the vertex set of $F$ and $H$ are $V(F)=\{x_0,x_1,x_2,\ldots,x_p\}$ and $V(H)=\{x_0,x_{p+1},x_{p+2},\ldots,x_{p+q}\}$, respectively. Then the vertex set of $G$ is $V(G)=\{x_0,x_1,x_2,\ldots,x_p,x_{p+1},\ldots,x_{p+q}\}$. The matrices $D_F=(d_{uv})_{u,v\in V(F)}$ and $D_H=(d_{uv})_{u,v\in V(H)}$ are generalized distance matrices of $F$ and $H$, respectively. We can write them in the following form
\begin{eqnarray*}
D_F & = & \left(
\begin{array}{cc}
0 & \begin{array}{ccc}
a_1 & \cdots &a_p
\end{array}\\
\begin{array}{c}
c_1\\
\vdots \\
c_p
\end{array} & D_{F,x_0}
\end{array}
\right)=\left(
\begin{array}{cc}
0 & a^T\\ c & D_{F,x_0}
\end{array}
\right),\\
D_H & = & \left(
\begin{array}{cc}
0 & \begin{array}{ccc}
b_1 & \cdots &b_q
\end{array}\\
\begin{array}{c}
d_1\\
\vdots \\
d_q
\end{array} & D_{H,x_0}
\end{array}
\right)=\left(
\begin{array}{cc}
0 & b^T\\ d & D_{H,x_0}
\end{array}
\right),
\end{eqnarray*}
where $a^T=[a_1,\cdots,a_p]$, $b^T=[b_1,\cdots,b_q]$, $c^T=[c_1,\cdots,c_p]$, $d^T=[d_1,\cdots,d_q]$, $D_{F,x_0}=(d_{x_ix_j})_{1\leqslant i,j\leqslant p}$ and $D_{H,x_0}=(d_{x_{p+i}x_{p+j}})_{1\leqslant i,j\leqslant q}$. Then by the definition of the generalized distance matrix, we have
\begin{equation*}
D=\left(
\begin{array}{ccc}
0 & a^T & b^T \\
c & D_{F,x_0} & M \\
d & N & D_{H,x_0}
\end{array}
\right),
\end{equation*}
where $M=(m_{ij})_{p\times q}$ with $m_{ij}=c_i+b_j$ for $1\leqslant i \leqslant p$ and $1\leqslant j \leqslant q$, and $N=(n_{st})_{q\times p}$ with $n_{st}=d_s+a_t$ for $1\leqslant s \leqslant q$ and $1\leqslant t \leqslant p$. Then $M=c\j^T+\j b^T$ and $N=d\j^T+\j a^T$. Note that
\begin{eqnarray*}
\BI_G(x_0) & = & \BI_F(x_0)\cup \BI_H(x_0);\\
\BI_G(x_i) & = & \BI_F(x_i), \text{ for $1\leqslant i\leqslant p$};\\
\BI_G(x_{p+j}) & = & \BI_H(x_{p+j}), \text{ for $1\leqslant j\leqslant q$}.
\end{eqnarray*}
Then
\begin{eqnarray*}
\alpha_{x_0} = (\alpha_F)_{x_0}+(\alpha_H)_{x_0}-1; && \beta_{x_0} = (\beta_F)_{x_0}+(\beta_H)_{x_0}-1;\\
\alpha_{x_i} = (\alpha_F)_{x_i}, && \beta_{x_i} = (\beta_F)_{x_i}, \text{ for $1\leqslant i\leqslant p$};\\
\alpha_{x_{p+j}} = (\alpha_H)_{x_{p+j}}, && \beta_{x_{p+j}} = (\beta_H)_{x_{p+j}}, \text{ for $1\leqslant j\leqslant q$}.
\end{eqnarray*}
Let
\begin{eqnarray*}
\hat{\alpha}_F^T=[\alpha_{x_1},\alpha_{x_2},\cdots,\alpha_{x_p}], && \hat{\beta}_F^T=[\beta_{x_1},\beta_{x_2},\cdots,\beta_{x_p}],\\
\hat{\alpha}_H^T=[\alpha_{x_{p+1}},\alpha_{x_{p+2}},\cdots,\alpha_{x_{p+q}}], && \hat{\beta}_H^T=[\beta_{x_{p+1}},\beta_{x_{p+2}},\cdots,\beta_{x_{p+q}}].
\end{eqnarray*}
So we can write
\begin{eqnarray*}
\alpha^T = [\alpha_{x_0},\hat{\alpha}_F^T,\hat{\alpha}_H^T], && \beta^T = [\beta_{x_0},\hat{\beta}_F^T,\hat{\beta}_H^T],\\
\alpha_F^T = [(\alpha_F)_{x_0},\hat{\alpha}_F^T], && \beta_F^T = [(\beta_F)_{x_0},\hat{\beta}_F^T],\\
\alpha_H^T = [(\alpha_H)_{x_0},\hat{\alpha}_H^T], && \beta_H^T = [(\beta_H)_{x_0},\hat{\beta}_H^T].
\end{eqnarray*}

Let $\gamma^T=\alpha^TD$. For the entry corresponding to the vertex $x_0$,
\begin{eqnarray*}\gamma_{x_0}=\alpha^TD[\cdot|x_0]&=&\hat{\alpha}_F^Tc+\hat{\alpha}_H^Td\\
&=&\alpha_F^TD_F[\cdot|x_0]+\alpha_H^TD_H[\cdot|x_0]\\
&=&(\alpha_F^TD_F)[\cdot|x_0]+(\alpha_H^TD_H)[\cdot|x_0]\\
&=&\lambda_F+\lambda_H=\lambda.
\end{eqnarray*}
Let $v$ be a vertex of $F-x_0$. Then $N[\cdot|v]=d+a_v\j$ and
\begin{eqnarray*}\gamma_v&=&\alpha^TD[\cdot|v]\\
&=&\alpha_{x_0}a_v+\hat{\alpha}_F^TD_{F,x_0}[\cdot|v]+\hat{\alpha}_H^TN[\cdot|v]\\
&=&\alpha_{x_0}a_v+\alpha_F^TD_F[\cdot|v]-(\alpha_F)_{x_0}a_v+\alpha_H^TD_H[\cdot|x_0]+a_v\hat{\alpha}_H^T\j\\
&=&a_v[\alpha_{x_0}-(\alpha_F)_{x_0}+1-(\alpha_H)_{x_0}]+(\alpha_F^TD_F)[\cdot|v]+(\alpha_H^TD_H)[\cdot|x_0]\\
&=&\lambda_F+\lambda_H=\lambda.
\end{eqnarray*}
Let $w$ be a vertex of $H-x_0$. A similar argument gives $\gamma_w=\lambda$. Now we have $M[\cdot|w]=c+b_w\j$. Then \begin{eqnarray*}\gamma_w&=&\alpha^TD[\cdot|w]\\
&=&\alpha_{x_0}b_w+\hat{\alpha}_F^TM[\cdot|w]+\hat{\alpha}_H^TD_{H,x_0}[\cdot|w]\\
&=&\alpha_{x_0}b_w+\alpha_F^TD_F[\cdot|x_0]+b_w\hat{\alpha}_F^T\j+\alpha_H^TD_H[\cdot|w]-(\alpha_H)_{x_0}b_w\\
&=&b_w[\alpha_{x_0}+1-(\alpha_F)_{x_0}-(\alpha_H)_{x_0}]+(\alpha_F^TD_F)[\cdot|x_0]+(\alpha_H^TD_H)[\cdot|w]\\
&=&\lambda_F+\lambda_H=\lambda.
\end{eqnarray*}
By the above calculation, we have $\gamma^T=\lambda\j^T$.

At the end of this proof, we will prove $LD+I=\beta\j^T$ by induction on the block number $r$. The case $r=1$ is clear. Now let $r \geqslant 2$, and we use notation as above. Then $L_H\j=\mathbf{0}$ and $L_HD_H+I=\beta_H\j^T$. For the subgraph $F$, we have $L_F\j=\mathbf{0}$ by the first paragraph of this proof, and $L_FD_F+I=\beta_F\j^T$ by the induction hypothesis. The matrices $L_F$ and $L_H$ can be written as
\begin{eqnarray*}
L_F & = & \left(
\begin{array}{cc}
\varphi & \begin{array}{ccc}
f_1 & \cdots & f_p
\end{array}\\
\begin{array}{c}
h_1\\
\vdots \\
h_p
\end{array} & L_{F,x_0}
\end{array}
\right)=\left(
\begin{array}{cc}
\varphi & f^T\\ h & L_{F,x_0}
\end{array}
\right),\\
L_H & = & \left(
\begin{array}{cc}
\tau & \begin{array}{ccc}
g_1 & \cdots & g_q
\end{array}\\
\begin{array}{c}
k_1\\
\vdots \\
k_q
\end{array} & L_{H,x_0}
\end{array}
\right)=\left(
\begin{array}{cc}
\tau & g^T\\ k & L_{H,x_0}
\end{array}
\right),
\end{eqnarray*}
where $f^T=[f_1,\cdots,f_p]$, $g^T=[g_1,\cdots,g_q]$, $h^T=[h_1,\cdots,h_p]$ and $k^T=[k_1,\cdots,k_q]$.
Then by the definition of $L$, we have
\begin{equation*}
L=\left(
\begin{array}{ccc}
\pi & f^T & g^T \\
h & L_{F,x_0} & Q \\
k & Q^T & L_{H,x_0}
\end{array}
\right),
\end{equation*}
where $\pi=\varphi+\tau$, and $Q_{p\times q}=\mathbf{0}$. Assume
\begin{eqnarray*}
L_FD_F+I & = & \left(
\begin{array}{cc}
B_{11} & B_{12}\\
B_{21} & B_{22}
\end{array}
\right),\\
L_HD_H+I & = & \left(
\begin{array}{cc}
C_{11} & C_{12}\\
C_{21} & C_{22}
\end{array}
\right),\\
LD+I & = & \left(
\begin{array}{ccc}
A_{11} & A_{12} & A_{13} \\
A_{21} & A_{22} & A_{23} \\
A_{31} & A_{32} & A_{33}
\end{array}
\right).
\end{eqnarray*}


Now we calculate the entries of $LD+I$. For the entries of the row corresponding to the vertex $x_0$, we have
\begin{eqnarray*}
A_{11}&=&f^Tc+g^Td+1\\
&=&B_{11}-1+C_{11}-1+1\\
&=&(\beta_F)_{x_0}+(\beta_H)_{x_0}-1=\beta_{x_0},
\end{eqnarray*}
\begin{eqnarray*}
A_{12}&=&\pi a^T+f^TD_{F,x_0}+g^TN\\
&=&\pi a^T+B_{12}-\varphi a^T+g^Td\j^T+g^T\j a^T\\
&=&\pi a^T+B_{12}-\varphi a^T+(C_{11}-1)\j^T-\tau a^T\\
&=&[\pi -\varphi -\tau]a^T+(\beta_F)_{x_0}\j^T+((\beta_H)_{x_0}-1)\j^T\\
&=&[(\beta_F)_{x_0}+(\beta_H)_{x_0}-1]\j^T=\beta_{x_0}\j^T,
\end{eqnarray*}
\begin{eqnarray*}
A_{13}&=&\pi b^T+f^TM+g^TD_{H,x_0}\\
&=&\pi b^T+f^Tc\j^T+f^T\j b^T+C_{12}-\tau b^T\\
&=&\pi b^T+(B_{11}-1)\j^T-\varphi b^T+C_{12}-\tau b^T\\
&=&\pi b^T+((\beta_F)_{x_0}-1)\j^T-\varphi b^T+(\beta_H)_{x_0}\j^T-\tau b^T\\
&=&[\pi -\varphi -\tau]b^T+[(\beta_F)_{x_0}+(\beta_H)_{x_0}-1]\j^T\\
&=&\beta_{x_0}\j^T.
\end{eqnarray*}
For the entries of the rows corresponding to the vertices of $F-x_0$, we have
\begin{eqnarray*}
A_{21}&=&L_{F,x_0}c=B_{21}=\hat{\beta}_F,\\
A_{22}&=&ha^T+L_{F,x_0}D_{F,x_0}+I=B_{22}=\hat{\beta}_F\j^T,\\
A_{23}&=&hb^T+L_{F,x_0}M\\
&=&hb^T+L_{F,x_0}c\j^T+L_{F,x_0}\j b^T\\
&=&hb^T+B_{21}\j^T-hb^T\\
&=&\hat{\beta}_F\j^T.
\end{eqnarray*}
For the entries of the rows corresponding to the vertices of $H-x_0$, we have
\begin{eqnarray*}
A_{31}&=&L_{H,x_0}d=C_{21}=\hat{\beta}_H,\\
A_{32}&=&ka^T+L_{H,x_0}N\\
&=&ka^T+L_{H,x_0}d\j^T+L_{H,x_0}\j a^T\\
&=&ka^T+C_{21}\j^T-ka^T\\
&=&\hat{\beta}_H\j^T,\\
A_{33}&=&kb^T+L_{H,x_0}D_{H,x_0}+I=C_{22}=\hat{\beta}_H\j^T.
\end{eqnarray*}
So we get that $LD+I=\beta\j^T$.
\end{proof}

\begin{remark}
When $\alpha_i=\beta_i$ for each $1\leqslant i\leqslant r$, the above theorem implies results in~\cite{Zhou2017DMdistance-wellDefined}. So all the graphs there can be dealt with by the above theorem. Besides, we give new applications of the above theorem to weighted cactoid digraphs in the following section.
\end{remark}

\begin{remark}\label{remark D does not depent on D_i}
In Theorem~\ref{thm inverse of distance matrix by modified LapExp matrix}, for some index $1\leqslant i\leqslant r$, the case $\lambda_i$ is not invertible may happen while $\lambda$ is invertible. This means the matrix $D$ is invertible doesn't depend on whether each $D_i$ is invertible or not, but on the sum $\sum\limits_{i=1}^r\lambda_i$ is invertible.
\end{remark}



\section{The weighted cactoid digraph}\label{section weighted cactoid digraph}

An edge with a direction is called an \emph{arc}. A \emph{directed path} is a sequence of arcs with the same direction which is from each vertex to its successor in the sequence. A \emph{directed cycle} is a directed path with its starting vertex and ending vertex coincide with each other. Each path (or cycle) in an undirected graph corresponds to two directed paths (or directed cycles) with opposite directions. Note that a directed cycle can have length two, but the minimum length of a cycle is three. A \emph{cactoid digraph} is a strongly connected directed graph whose blocks are directed cycles. The inverse of the distance matrix of a cactoid digraph was studied in~\cite{Hou Chen Inverse of cactoid graph}. Here we study the \emph{weighted cactoid digraph} which is a strongly connected directed graph with each of its block a weighted directed cycle. The weighted directed cycle $\w C_n$ is defined below.

Let $\mathbb{Z}$ be the ring of integers. Let $n\geqslant 2$, and let $\mathbb{Z}_n$ be the ring of integers modulo $n$. Let $\phi$ be the natural map from $\mathbb{Z}$ to $\mathbb{Z}_n$, and let \begin{center}$X=\{0,1,\ldots,n-1\}\subseteq \mathbb{Z}$.\end{center} Then $\mathbb{Z}_n=\phi(X)=\{\phi(0),\phi(1),\ldots,\phi(n-1)\}$. Let $\d C_n$ be a directed cycle with vertex set $V(\d C_n)=\mathbb{Z}_n$ and arc set $E(\d C_n)=\{(\phi(i),\phi(i+1))\mid i\in X\}$. So for any $i\in X$ and $k\in \mathbb{Z}$, the vertex $\phi(i+kn)$ coincides with $\phi(i)$. Then the distance matrix of $\d C_n$ is
\begin{eqnarray*}
\D(\d C_n)=(\partial_{\phi(i),\phi(j)})_{i,j\in X} & = & P+2P^2+\cdots +(n-1)P^{n-1}\\
& = & \left(\begin{array}{cccccc}
      0 & 1 & 2 & \cdots & n-2 & n-1\\
      n-1 & 0 & 1 & \cdots & n-3 & n-2\\
      n-2 & n-1 & 0 & \cdots & n-4 & n-3\\
      \vdots & \vdots & \vdots & \ddots & \vdots & \vdots\\
      2 & 3 & 4 & \cdots & 0 & 1\\
      1 & 2 & 3 & \cdots & n-1 & 0
      \end{array}\right),
\end{eqnarray*}
where $P=(p_{\phi(i),\phi(j)})_{i,j\in X}$ is the cyclic permutation matrix whose nonzero entries are $p_{\phi(i),\phi(i+1)}=1$ ($i\in X$). Notice that the distance satisfying
\begin{equation*}
\partial_{\phi(i),\phi(j)}\in X\text{ and }\partial_{\phi(i),\phi(j)}\equiv j-i \pmod{n}\text{~~~~for $i,j\in X$}.
\end{equation*}


Let $\mathbb{R}$ be a commutative ring with identity. We use $\frac{1}{r}$ to denote the multiplicative inverse of an invertible element $r\in \mathbb{R}$. The \emph{weighted directed cycle} $\w C_n$ is obtained from $\d C_n$ by giving each arc $(\phi(i),\phi(i+1))$ a weight $w_i\in \mathbb{R}$, where $i\in X$. Then the distance matrix of $\w C_n$ is $\D(\w C_n)=(d_{\phi(i),\phi(j)})_{i,j\in X}$, where
\begin{equation*}
d_{\phi(i),\phi(j)}=\left\{\begin{array}{ll}
0, & \text{if $i=j$};\\
\\
\sum\limits_{k=0}^{\partial_{\phi(i),\phi(j)}-1}w_{\phi(i+k)}, & \text{if $i\neq j$}.
\end{array}\right.
\end{equation*}
Let \begin{eqnarray}
w & = & \sum\limits_{i=0}^{n-1}w_{\phi(i)},\\
w^{(2)} & = & \sum\limits_{0\leqslant i<j\leqslant n-1}w_{\phi(i)}w_{\phi(j)}.
\end{eqnarray}
We call $w$ and $w^{(2)}$ \emph{the first weight} and \emph{the second weight} of $\w C_n$, respectively.

Let $A=(a_{ij})$ be an $n\times n$ matrix. We denote its \emph{determinant} by $\det(A)$ and $A(i|j)$ the \emph{submatrix} obtained from $A$ by deleting the $i$-th row and $j$-th column. The \emph{cofactor} of the entry $a_{ij}$ is $(-1)^{i+j}\det(A(i|j))$. The \emph{cofactor} $\cof(A)$ of $A$ is defined as the sum of cofactors of the entries of $A$, i.e. $\cof(A)=\sum\limits_{i,j=1}^n (-1)^{i+j}\det(A(i|j))$. We denote the \emph{adjoint matrix} of $A$ by $\adj(A)=(b_{ij})_{n\times n}$, i.e. $b_{ji}=(-1)^{i+j}\det(A(i|j))$. By the definitions, we have $\cof(A)=\j^T \adj(A) \j$. If $A$ is invertible, then $\displaystyle A^{-1}=\frac{\adj(A)}{\det(A)}$, and so we have \begin{equation}\label{eqn cof=det jT A-1 j}\cof(A)=\det(A)\j^TA^{-1}\j.\end{equation}

\begin{lemma}\label{lem weighted directed cycle}
We use the above notation, and let $W=\D(\w C_n)$. Suppose the first weight $w$ is invertible, and let
\begin{eqnarray*}
\lambda & = & \frac{w^{(2)}}{w},\\
\alpha & = & \frac{1}{w}[w_{\phi(n-1)},w_{\phi(0)},\ldots,w_{\phi(n-2)}]^T,\\
\beta & = & \frac{1}{w}[w_{\phi(0)},w_{\phi(1)},\ldots,w_{\phi(n-1)}]^T,\\
L & = & \frac{1}{w}(I-P).
\end{eqnarray*}
Then
\begin{enumerate}
\item $\alpha^T\j=1$, $L\j=\mathbf{0}$, $\alpha^TW=\lambda\j^T$ and $LW+I=\beta\j^T$; and
\item $\j^T\beta=1$, $\j^TL=\mathbf{0}$, $W\beta=\lambda\j$ and $WL+I=\j\alpha^T$.
\end{enumerate}
\end{lemma}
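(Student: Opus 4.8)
The plan is to reduce all eight identities of the lemma to the two structural identities
\[
W - PW \;=\; w\,\beta\j^T - wI,
\qquad
W - WP \;=\; w\,\j\alpha^T - wI,
\]
where $W=\D(\w C_n)$ and $P$ is the cyclic permutation matrix; these in turn I would prove entrywise from the definition of $W$. Write $d_{ij}=d_{\phi(i),\phi(j)}$, so that $(W-PW)_{ij}=d_{ij}-d_{i+1,j}$. The characterization $\partial_{\phi(i),\phi(j)}\in X$ with $\partial_{\phi(i),\phi(j)}\equiv j-i \pmod n$ shows that for $j\not\equiv i$ one has $\partial_{\phi(i+1),\phi(j)}=\partial_{\phi(i),\phi(j)}-1$, so substituting into $d_{\phi(i),\phi(j)}=\sum_{k=0}^{\partial-1}w_{\phi(i+k)}$ makes the partial sums telescope to give $d_{ij}-d_{i+1,j}=w_{\phi(i)}$; for $j\equiv i$ one has $d_{ij}=0$ while $\partial_{\phi(i+1),\phi(i)}=n-1$ gives $d_{i+1,i}=\sum_{m=1}^{n-1}w_{\phi(i+m)}=w-w_{\phi(i)}$, whence $d_{ij}-d_{i+1,j}=w_{\phi(i)}-w$. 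Since $w\beta_i=w_{\phi(i)}$, this says $(W-PW)_{ij}=w\beta_i-w\,\delta_{ij}$, which is precisely $w\beta\j^T-wI$. The identity $W-WP=w\j\alpha^T-wI$ is proved in exactly the same way from $(W-WP)_{ij}=d_{ij}-d_{i,j-1}$, using $\partial_{\phi(i),\phi(j-1)}=\partial_{\phi(i),\phi(j)}-1$ for $j\not\equiv i$ and $\partial_{\phi(i),\phi(i-1)}=n-1$ for $j\equiv i$; the $(i,j)$-entry comes out to be $w_{\phi(j-1)}-w\,\delta_{ij}=w\alpha_j-w\,\delta_{ij}$. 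This index bookkeeping with the diagonal correction term is the only genuine obstacle in the whole proof.

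Granting the two structural identities, everything else is formal linear algebra. The relations $\alpha^T\j=1$ and $\j^T\beta=1$ are immediate from $\sum_i w_{\phi(i)}=w$, and $L\j=\frac{1}{w}(I-P)\j=\mathbf{0}$ and $\j^TL=\mathbf{0}$ hold because $P\j=\j$ and $\j^TP=\j^T$. Multiplying $W-PW=w\beta\j^T-wI$ on the left by $\frac{1}{w}$ gives $LW+I=\beta\j^T$, and multiplying $W-WP=w\j\alpha^T-wI$ on the right by $\frac{1}{w}$ gives $WL+I=\j\alpha^T$. For $\alpha^TW=\lambda\j^T$, multiply the second structural identity on the left by $\alpha^T$ and use $\alpha^T\j=1$: the right-hand side collapses to $w\alpha^T-w\alpha^T=\mathbf{0}$, so $\alpha^TW=\alpha^TWP$; since right multiplication by $P$ cyclically shifts the entries of a row vector, $\alpha^TW$ has all entries equal, say $\alpha^TW=c\j^T$. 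Symmetrically, multiplying $W-PW=w\beta\j^T-wI$ on the right by $\beta$ and using $\j^T\beta=1$ gives $W\beta=PW\beta$, so $W\beta=c'\j$ for some scalar $c'$.

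To pin down $c$ and $c'$ I would compute the $\phi(0)$-entry of each vector. Using $\partial_{\phi(i),\phi(0)}=n-i$ for $1\leqslant i\leqslant n-1$ together with the partial-sum formula for $W$,
\[
c=\frac{1}{w}\sum_{i=1}^{n-1}w_{\phi(i-1)}\sum_{m=i}^{n-1}w_{\phi(m)}
=\frac{1}{w}\sum_{0\leqslant l<m\leqslant n-1}w_{\phi(l)}w_{\phi(m)}=\frac{w^{(2)}}{w}=\lambda ,
\]
and likewise, using $\partial_{\phi(0),\phi(j)}=j$ for $1\leqslant j\leqslant n-1$,
\[
c'=\frac{1}{w}\sum_{j=1}^{n-1}w_{\phi(j)}\sum_{k=0}^{j-1}w_{\phi(k)}
=\frac{1}{w}\sum_{0\leqslant k<j\leqslant n-1}w_{\phi(k)}w_{\phi(j)}=\frac{w^{(2)}}{w}=\lambda .
\]
Hence $\alpha^TW=\lambda\j^T$ and $W\beta=\lambda\j$, which completes both parts of the lemma. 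The invertibility hypothesis on $w$ enters only through the divisions by $w$ used to form $\lambda,\alpha,\beta,L$ and to manipulate the structural identities, so the argument remains valid over the given commutative ring.
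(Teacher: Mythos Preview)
Your proof is correct. The entrywise computation establishing $W-PW=w\beta\j^T-wI$ is essentially identical to the paper's direct calculation of $LW+I=\beta\j^T$; the paper simply computes $wy_{\phi(i),\phi(j)}=d_{\phi(i),\phi(j)}-d_{\phi(i+1),\phi(j)}+w\delta_{ij}$ case by case, which is your structural identity in disguise.

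Where you genuinely diverge is in proving $\alpha^TW=\lambda\j^T$. The paper attacks each column directly, showing $w\alpha^TW_{\phi(j)}=w^{(2)}$ for every $j$ via a somewhat lengthy splitting and recombination of double sums. You instead multiply the identity $W-WP=w\j\alpha^T-wI$ on the left by $\alpha^T$, use $\alpha^T\j=1$ to see that $\alpha^TW$ is $P$-invariant and hence constant, and then evaluate a single entry. This is a cleaner route: it replaces $n$ index manipulations by one, at the cost of having first established the companion structural identity (which you need anyway for $WL+I=\j\alpha^T$). The paper's approach is more self-contained for each individual claim; yours exploits the interplay between the left and right identities and the cyclic symmetry of $P$ to avoid repetition.
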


\begin{proof}
The proofs of the two results are similar, so we only give the proof of the first one, i.e. we will show \begin{center}$\alpha^T\j=1$, $L\j=\mathbf{0}$, $\alpha^TW=\lambda\j^T$ and $LW+I=\beta\j^T$.\end{center} The conditions $\alpha^T\j=1$ and $L\j=\mathbf{0}$ are obvious. Now we show $\alpha^TW=\lambda\j^T$. Let $W_{\phi(0)},W_{\phi(1)},\ldots,W_{\phi(n-1)}$ be all the column vectors of $W$, i.e. \begin{center}$W=[W_{\phi(0)},W_{\phi(1)},\ldots,W_{\phi(n-1)}]$.\end{center} For $j\in X$, we have

\begin{eqnarray*}
w\alpha^TW_{\phi(j)} & = & \sum\limits_{i=0}^{n-1}w_{\phi(i)}d_{\phi(i+1),\phi(j)}\\
 & = & \sum\limits_{i=0}^{j-2}w_{\phi(i)}d_{\phi(i+1),\phi(j)}+\sum\limits_{i=j}^{n-1}w_{\phi(i)}d_{\phi(i+1),\phi(j)}\\
 & = & \sum\limits_{i=0}^{j-2}w_{\phi(i)}\sum\limits_{k=i+1}^{j-1}w_{\phi(k)}
+\sum\limits_{i=j}^{n-1}w_{\phi(i)}\sum\limits_{k=i+1}^{n+j-1}w_{\phi(k)}\\
 & = & \sum\limits_{i=0}^{j-2}w_{\phi(i)}\sum\limits_{k=i+1}^{j-1}w_{\phi(k)}
+\sum\limits_{i=j}^{n-1}w_{\phi(i)}\left(\sum\limits_{k=i+1}^{n-1}w_{\phi(k)}+\sum\limits_{k=n}^{n+j-1}w_{\phi(k)}\right)\\
 & = & \sum\limits_{i=0}^{j-1}w_{\phi(i)}\sum\limits_{k=i+1}^{j-1}w_{\phi(k)}
+\sum\limits_{i=0}^{j-1}w_{\phi(i)}\sum\limits_{k=j}^{n-1}w_{\phi(k)}
+\sum\limits_{i=j}^{n-1}w_{\phi(i)}\sum\limits_{k=i+1}^{n-1}w_{\phi(k)}\\
 & = & \sum\limits_{i=0}^{j-1}w_{\phi(i)}\sum\limits_{k=i+1}^{n-1}w_{\phi(k)}
+\sum\limits_{i=j}^{n-1}w_{\phi(i)}\sum\limits_{k=i+1}^{n-1}w_{\phi(k)}\\
 & = & \sum\limits_{i=0}^{n-1}w_{\phi(i)}\sum\limits_{k=i+1}^{n-1}w_{\phi(k)}\\
 & = & w^{(2)}.
\end{eqnarray*}
Hence we have $\alpha^TW=\lambda\j^T$.

At last we calculate $Y=LW+I=(y_{\phi(i),\phi(j)})_{i,j\in X}$. Let $i,j\in X$ with $i\neq j$. Then
\begin{eqnarray*}
wy_{\phi(i),\phi(i)} & = & d_{\phi(i),\phi(i)}-d_{\phi(i+1),\phi(i)}+w\\
 & = & w-d_{\phi(i+1),\phi(i)}=d_{\phi(i),\phi(i+1)}=w_{\phi(i)},\\
wy_{\phi(i),\phi(j)} & = & d_{\phi(i),\phi(j)}-d_{\phi(i+1),\phi(j)}=w_{\phi(i)}
\end{eqnarray*}
This means $wY=w\beta\j^T$. So we have $LW+I=\beta\j^T$.
\end{proof}

\begin{lemma}\label{lem det of dCn}
We use the above notation, and let $W=\D(\w C_n)$. Suppose the first weight $w$ is invertible. Then the determinant of $W$ is
\begin{equation}\label{eqn det weighted directed cycle}
\det(W)=(-1)^{n-1}w^{n-2}w^{(2)}.
\end{equation}
\end{lemma}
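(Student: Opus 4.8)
The plan is to compute $\det(W)$ directly from the structural identities already established in Lemma~\ref{lem weighted directed cycle}, which give us a great deal of information about $W$ essentially for free. From that lemma we know $LW + I = \beta\j^T$ with $L = \frac{1}{w}(I-P)$ and $\beta = \frac{1}{w}[w_{\phi(0)},\ldots,w_{\phi(n-1)}]^T$. Multiplying through by $w$, this reads $(I-P)W + wI = \j[w_{\phi(0)},\ldots,w_{\phi(n-1)}]$ — but it is cleaner to keep the factored form $(I-P)W = w(\beta\j^T - I)$. First I would take determinants of both sides: $\det(I-P)\det(W) = w^n\det(\beta\j^T - I)$. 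The determinant of $\beta\j^T - I$ is $(-1)^n(1 - \j^T\beta) = (-1)^n(1 - 1) = 0$, since $\j^T\beta = 1$, so this naive route gives $0 = 0$ because $I - P$ is also singular ($P\j=\j$). So the direct determinant of the whole identity is uninformative, and the real work is to sidestep the shared kernel.

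The fix is to pass to cofactors rather than determinants, using the bridge Equation~(\ref{eqn cof=det jT A-1 j}) together with the explicit inverse. By Lemma~\ref{lem inv of a matrix by lap} (or directly Lemma~\ref{lem weighted directed cycle} fed into Theorem~\ref{thm inverse of distance matrix by modified LapExp matrix}, though here one block suffices), $W$ is invertible precisely when $\lambda = w^{(2)}/w$ is invertible, with $W^{-1} = -L + \frac{1}{\lambda}\beta\alpha^T$. When $\lambda$ is a unit, $\j^T W^{-1}\j = -\j^T L\j + \frac{1}{\lambda}(\j^T\beta)(\alpha^T\j) = 0 + \frac{1}{\lambda}\cdot 1 \cdot 1 = \frac{1}{\lambda} = \frac{w}{w^{(2)}}$; hence $\cof(W) = \det(W)\cdot \frac{w}{w^{(2)}}$, i.e. $\det(W) = \frac{w^{(2)}}{w}\cof(W)$. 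So it suffices to compute the scalar $\cof(W) = \j^T\adj(W)\j$ and show it equals $(-1)^{n-1}w^{n-1}$; then $\det(W) = \frac{w^{(2)}}{w}\cdot(-1)^{n-1}w^{n-1} = (-1)^{n-1}w^{n-2}w^{(2)}$. To handle the general case where $w^{(2)}$ need not be a unit, I would either argue by a density/polynomial-identity principle — the identity $\det(W) = (-1)^{n-1}w^{n-2}w^{(2)}$ is a polynomial identity in the $w_{\phi(i)}$ over $\mathbb{Z}$, so it suffices to verify it generically, e.g. over the fraction field of $\mathbb{Z}[w_{\phi(0)},\ldots,w_{\phi(n-1)}]$ where $w^{(2)}$ is automatically nonzero — or compute $\cof(W)$ outright without inverting.

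The cleanest self-contained computation of $\cof(W)$ proceeds from $(I-P)W + wI = \j\, b^T$ where $b^T = [w_{\phi(0)},\ldots,w_{\phi(n-1)}]$ (this is $w\beta^T$). Rearranged, $(I-P)W = \j b^T - wI$. Apply $\adj$, or better, use the following standard row-operation trick: perform the row operation ``subtract row $i{+}1$ from row $i$'' simultaneously for all $i$ on the matrix $W$; this replaces $W$ by $(I-P)W$, which has a known simple form, namely $(I-P)W = \j b^T - wI$, a rank-adjusted scalar-plus-rank-one matrix. The determinant of $W$ itself is unchanged by elementary row operations only if we are careful — actually the operation ``replace row $i$ by row $i$ minus row $i{+}1$'' for $i = 0,\ldots,n-2$ (leaving the last row alone) is a sequence of determinant-preserving moves, and this realizes $W \mapsto$ the matrix whose first $n-1$ rows are those of $\j b^T - wI$ and whose last row is the last row of $W$. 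Expanding that determinant along the structured rows (each of the form $b^T - w e_i^T$) reduces to a routine cofactor expansion that telescopes. Alternatively — and this is probably the route the authors take — one verifies directly that $\cof(W) = \j^T\adj(W)\j = (-1)^{n-1}w^{n-1}$ by the same manipulation applied to $\adj$. I expect the main obstacle to be exactly this book-keeping: getting the sign $(-1)^{n-1}$ and the power $w^{n-1}$ right in the cofactor expansion, and cleanly handling the degenerate case when $w^{(2)}$ is not invertible (for which the polynomial-identity argument over $\mathbb{Z}[w_{\phi(i)}]$ is the safest escape).
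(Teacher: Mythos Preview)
Your starting point --- exploiting the identity $(I-P)W = w(\beta\j^T - I)$ from Lemma~\ref{lem weighted directed cycle} via row operations --- is exactly the paper's starting point: subtracting row $i{+}1$ from row $i$ for $i=0,\ldots,n-2$ is precisely the passage $X_1\to X_2$ in the paper's proof. Where you diverge is in how to finish. The paper does not expand $X_2$ directly; instead it borders the matrix by a row of ones (forming $X_3$), clears the first column to reach an almost block-triangular $X_4$, performs one column operation to $X_5$, and then --- crucially --- invokes $W\beta=\lambda\j$ from Lemma~\ref{lem weighted directed cycle} to evaluate the lone surviving off-diagonal entry $b=-R_1(n)\beta$, yielding $\det(W)=-b(-w)^{n-1}$. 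Your phrase ``routine cofactor expansion that telescopes'' hides exactly this step: the structured rows are $w_{\phi(i)}\j^T - w\,e_i^T$ (not $b^T - w e_i^T$ as you wrote), and a naive multilinear expansion over those $n-1$ rows does not obviously telescope without some device to handle the unreduced last row of $W$. The paper's bordering plus the single use of $W\beta=\lambda\j$ is that device.

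Your alternative $\cof(W)$ route is a genuinely different and potentially slicker idea, but as written it is circular: you derive $\det(W)=\lambda\,\cof(W)$ from Equation~(\ref{eqn cof=det jT A-1 j}), which already presupposes $W$ invertible, and then propose to compute $\cof(W)$ ``by the same manipulation applied to $\adj$'' without saying what that is. The rescue is available: from $(I-P)W=w(\beta\j^T-I)$ take adjugates to get $\adj(W)\,\adj(I-P)=w^{n-1}\adj(\beta\j^T-I)$; since $\adj(I-P)=\j\j^T$ and $\adj(\beta\j^T-I)=(-1)^{n-1}\beta\j^T$, this gives $\adj(W)\j=(-1)^{n-1}w^{n-1}\beta$, whence $\det(W)\j=W\,\adj(W)\j=(-1)^{n-1}w^{n-1}W\beta=(-1)^{n-1}w^{n-1}\lambda\j$ with no invertibility of $w^{(2)}$ needed. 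That line is clean and avoids the paper's bordering, but you would need to supply it explicitly; as stated, your proposal has not closed the gap.
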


\begin{proof}
By elementary determinant evaluations, we get the following matrices $X_1=W$, $X_2$, $X_3=\left(\begin{array}{cc}1 & \j^T\\ \mathbf{0} & X_2\end{array}\right)$, $X_4$ and $X_5$. For $1\leqslant i\leqslant 5$, we use $R_i(j)$ to denote the $j$-th row of $X_i$ and $C_i(j)$ to denote the $j$-th column of $X_i$. The following are the elementary operations.
\begin{enumerate}
\item $X_2$ is obtained from $X_1$ by the following orerations:
\begin{eqnarray*}
R_2(1) & = & R_1(1)-R_1(2),\\
R_2(2) & = & R_1(2)-R_1(3),\\
& \cdots & \\
R_2(n-1) & = & R_1(n-1)-R_1(n),\\
R_2(n) & = & R_1(n).
\end{eqnarray*}
\item $X_4$ is obtained from $X_3$ by the following orerations:
\begin{eqnarray*}
R_4(1) & = & R_3(1),\\
R_4(2) & = & R_3(2)-w_{\phi(0)}R_3(1),\\
R_4(3) & = & R_3(3)-w_{\phi(1)}R_3(1),\\
& \cdots & \\
R_4(n) & = & R_3(n)-w_{\phi(n-2)}R_3(1),\\
R_4(n+1) & = & R_3(n+1).
\end{eqnarray*}
\item $X_5$ is obtained from $X_4$ by the following orerations:
\begin{eqnarray*}
C_5(1) & = & C_4(1)-\sum\limits_{i=0}^{n-2}\frac{w_{\phi(i)}}{w}C_4(i+2),\\
C_5(i) & = & C_4(i), \text{ for $2\leqslant i\leqslant n+1$}.
\end{eqnarray*}
\end{enumerate}

\begin{equation*}
X_1 = \left(\begin{array}{cccccc}
                        0 & d_{\phi(0),\phi(1)} & d_{\phi(0),\phi(2)} & \cdots & d_{\phi(0),\phi(n-2)} & d_{\phi(0),\phi(n-1)}\\
                        d_{\phi(1),\phi(0)} & 0 & d_{\phi(1),\phi(2)} & \cdots & d_{\phi(1),\phi(n-2)} & d_{\phi(1),\phi(n-1)}\\
                        d_{\phi(2),\phi(0)} & d_{\phi(2),\phi(1)} & 0 & \cdots & d_{\phi(2),\phi(n-2)} & d_{\phi(2),\phi(n-1)}\\                        \vdots & \vdots & \vdots & \ddots & \vdots & \vdots\\
                        d_{\phi(n-2),\phi(0)} & d_{\phi(n-2),\phi(1)} & d_{\phi(n-2),\phi(2)} & \cdots & 0 & d_{\phi(n-2),\phi(n-1)}\\                        d_{\phi(n-1),\phi(0)} & d_{\phi(n-1),\phi(1)} & d_{\phi(n-1),\phi(2)} & \cdots & d_{\phi(n-1),\phi(n-2)} & 0
                        \end{array}\right),
\end{equation*}
\begin{equation*}
X_2 = \left(\begin{array}{cccccc}
                        w_{\phi(0)}-w & w_{\phi(0)} & w_{\phi(0)} & \cdots & w_{\phi(0)} & w_{\phi(0)}\\
                        w_{\phi(1)} & w_{\phi(1)}-w & w_{\phi(1)} & \cdots & w_{\phi(1)} & w_{\phi(1)}\\
                        w_{\phi(2)} & w_{\phi(2)} & w_{\phi(2)}-w & \cdots & w_{\phi(2)} & w_{\phi(2)}\\
                        \vdots & \vdots & \vdots & \ddots & \vdots & \vdots\\
                        w_{\phi(n-2)} & w_{\phi(n-2)} & w_{\phi(n-2)} & \cdots & w_{\phi(n-2)}-w & w_{\phi(n-2)}\\                         d_{\phi(n-1),\phi(0)} & d_{\phi(n-1),\phi(1)} & d_{\phi(n-1),\phi(2)} & \cdots & d_{\phi(n-1),\phi(n-2)} & 0
                        \end{array}\right),
\end{equation*}
\begin{equation*}
X_4 = \left(\begin{array}{ccccccc}
                        1 & 1 & 1 & 1 & \cdots & 1 & 1\\
                        -w_{\phi(0)} & -w & 0 & 0 & \cdots & 0 & 0\\
                        -w_{\phi(1)} & 0 & -w & 0 & \cdots & 0 & 0\\
                        -w_{\phi(2)} & 0 & 0 & -w & \cdots & 0 & 0\\
                        \vdots & \vdots & \vdots & \vdots & \ddots & \vdots & \vdots\\
                        -w_{\phi(n-2)} & 0 & 0 & 0 & \cdots & -w & 0\\
                        0 & d_{\phi(n-1),\phi(0)} & d_{\phi(n-1),\phi(1)} & d_{\phi(n-1),\phi(2)} & \cdots & d_{\phi(n-1),\phi(n-2)} & 0
                        \end{array}\right),
\end{equation*}
\begin{equation*}
X_5 = \left(\begin{array}{ccccccc}
                        a & 1 & 1 & 1 & \cdots & 1 & 1\\
                        0 & -w & 0 & 0 & \cdots & 0 & 0\\
                        0 & 0 & -w & 0 & \cdots & 0 & 0\\
                        0 & 0 & 0 & -w & \cdots & 0 & 0\\
                        \vdots & \vdots & \vdots & \vdots & \ddots & \vdots & \vdots\\
                        0 & 0 & 0 & 0 & \cdots & -w & 0\\
                        b & d_{\phi(n-1),\phi(0)} & d_{\phi(n-1),\phi(1)} & d_{\phi(n-1),\phi(2)} & \cdots & d_{\phi(n-1),\phi(n-2)} & 0
                        \end{array}\right).
\end{equation*}

In matrix $X_5$, the parameters
\begin{eqnarray*}
a & = & 1-\sum\limits_{i=0}^{n-2}\frac{w_{\phi(i)}}{w}=\frac{w_{\phi(n-1)}}{w},\\
b & = & -\sum\limits_{i=0}^{n-2}\frac{d_{\phi(n-1),\phi(i)}w_{\phi(i)}}{w}=-R_1(n)\beta.
\end{eqnarray*}
By Lemma~\ref{lem weighted directed cycle}, we have $wX_1\beta=w^{(2)}\j$. Then for $1\leqslant j\leqslant n$, we get $wR_1(j)\beta=w^{(2)}$. So $-wb=w^{(2)}$. Hence, $\det(W)=\det(X_2)=\det(X_3)=\det(X_4)=\det(X_5)= -b(-w)^{n-1}=(-1)^{n-1}w^{n-2}w^{(2)}$. Note that, when the ring $\mathbb{R}$ is the real numbers, the formula of the determinant $\det(W)$ is still true even if the first weight $w$ is not invertible.
\end{proof}

By Lemma~\ref{lem weighted directed cycle}, the distance matrix $W=\D(\w C_n)$ of the weighted directed cycle $\w C_n$ is a left and right $\LapExp^*(\lambda,\alpha,\beta,L)$ matrix, where $\lambda$, $\alpha$, $\beta$ and $L$ are defined in Lemma~\ref{lem weighted directed cycle}. So the bag $B=(W,\lambda,\alpha,\beta,L)$ is a left and right $\LapExp^*$ $n$-bag. The bag $B$ is called \emph{the natural bag} of the weighted directed cycle $\w C_n$. Suppose the first weight $w$ is invertible, then the determinant $\det(W)=(-1)^{n-1}w^{n-2}w^{(2)}$ by Lemma~\ref{lem det of dCn}. If we assume the second weight $w^{(2)}$ is invertible additionally, then $\lambda$ is invertible and by Lemmas~\ref{lem inv of a matrix by lap} and \ref{lem Laplacian-like matrix}, $W$ is invertible, \begin{center}$W^{-1}=-L+\frac{1}{\lambda}\beta\alpha^T=-L+\frac{w}{w^{(2)}}\beta\alpha^T$,\end{center} and $L$ is a Laplacian-like matrix. By Equation~(\ref{eqn cof=det jT A-1 j}), we have
\begin{equation}\label{eqn cof weighted directed cycle}
\cof(W)=\det(W)\j^TW^{-1}\j=\frac{1}{\lambda}\det(W)=(-1)^{n-1}w^{n-1}.
\end{equation}


As an application of Theorem~\ref{thm inverse of distance matrix by modified LapExp matrix}, we get the $\LapExp^*$ property of the distance matrix of the weighted cactoid digraph.


\begin{theorem}\label{thm weighted cactoid digraph}
Let $G$ be a weighted cactoid digraph with structure parameters $(G^n;G_1^{n_1},G_2^{n_2},\ldots,G_r^{n_r})$. For each $1\leqslant i\leqslant r$, suppose the first weight $w_i$ of the weighted directed cycle $G_i$ is invertible, and let $B_i=(W_i,\lambda_i,\alpha_i,\beta_i,L_i)$ be the natural bag of $G_i$. Let $B=(W_c,\lambda,\alpha,\beta,L)$ be the composition bag of natural bags $B_1,B_2,\ldots,B_r$, where $W_c$ is the distance matrix of $G$. Then $B$ is a left and right $\LapExp^*$ bag, and $L$ is a Laplacian-like matrix . Furthermore, if $\lambda=\sum\limits_{i=1}^r\lambda_i$ is invertible, then $W_c$ is invertible and $W_c^{-1}=-L+\frac{1}{\lambda}\beta\alpha^T$.
\end{theorem}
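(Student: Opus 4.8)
The plan is to obtain everything by combining two results already in hand: Lemma~\ref{lem weighted directed cycle}, which produces the natural bag of each block together with both its left and its right $\LapExp^*$ identities, and Theorem~\ref{thm inverse of distance matrix by modified LapExp matrix}, which glues these bags along the block structure of $G$. First I would record the two preliminary facts that make the set-up of Theorem~\ref{thm inverse of distance matrix by modified LapExp matrix} applicable: the distance matrix $W_c=\D(G)$ is a generalized distance matrix of $G$; and, since for any two vertices of a common block $G_i$ every shortest directed path between them stays inside $G_i$, one has $\sub(W_c;G,G_i)=\D(G_i)=W_i$. Thus each natural bag $B_i=(W_i,\lambda_i,\alpha_i,\beta_i,L_i)$ is built from the submatrix of $W_c$ indexed by $V(G_i)$, exactly as Definition~\ref{def composition of bags} requires. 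Because the first weight $w_i$ of $G_i$ is invertible, both parts of Lemma~\ref{lem weighted directed cycle} apply to each $G_i$, so $B_i$ is simultaneously a left $\LapExp^*$ $n_i$-bag and a right $\LapExp^*$ $n_i$-bag.

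Next I would invoke Theorem~\ref{thm inverse of distance matrix by modified LapExp matrix} twice. The key observation is that the parameters $\lambda,\alpha,\beta,L$ attached to the composition bag in Definition~\ref{def composition of bags} are functions only of the tuples $(D_i,\lambda_i,\alpha_i,\beta_i,L_i)$ and of the incidence pattern of the blocks of $G$; they do not depend on whether one reads the $B_i$ as left or as right bags. Hence the composition bag produced in either reading is literally the same $n$-bag $B=(W_c,\lambda,\alpha,\beta,L)$. Applying Theorem~\ref{thm inverse of distance matrix by modified LapExp matrix} with the $B_i$ treated as left $\LapExp^*$ $n_i$-bags yields $\alpha^T\j=1$, $L\j=\mathbf{0}$, $\alpha^TW_c=\lambda\j^T$ and $LW_c+I=\beta\j^T$; applying it with the $B_i$ treated as right $\LapExp^*$ $n_i$-bags yields $\j^T\beta=1$, $\j^TL=\mathbf{0}$, $W_c\beta=\lambda\j$ and $W_cL+I=\j\alpha^T$. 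Consequently $B$ is at once a left and a right $\LapExp^*$ $n$-bag.

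It remains to read off the two concluding assertions. The left reading gives $L\j=\mathbf{0}$ and the right reading gives $\j^TL=\mathbf{0}$, which is precisely the definition of a Laplacian-like matrix, so $L$ is Laplacian-like (one may also cite Lemma~\ref{lem Laplacian-like matrix}, e.g. its third case). If moreover $\lambda=\sum_{i=1}^r\lambda_i$ is invertible, then the identities $\alpha^TW_c=\lambda\j^T$ and $LW_c+I=\beta\j^T$ put us in the first case of Lemma~\ref{lem inv of a matrix by lap}, which gives that $W_c$ is invertible with $W_c^{-1}=-L+\frac{1}{\lambda}\beta\alpha^T$. I do not anticipate a real obstacle: the proof is essentially an assembly of cited results, and the only points needing a line of justification are the isometric embedding of each block ($\sub(W_c;G,G_i)=W_i$) and the fact that the left-pass and right-pass composition bags coincide — after which Theorem~\ref{thm inverse of distance matrix by modified LapExp matrix} and Lemmas~\ref{lem inv of a matrix by lap} and \ref{lem Laplacian-like matrix} do all the work.
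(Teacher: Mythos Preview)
Your proposal is correct and follows exactly the approach the paper intends: the paper does not give a separate proof of this theorem but states it as an immediate application of Theorem~\ref{thm inverse of distance matrix by modified LapExp matrix} once Lemma~\ref{lem weighted directed cycle} has shown that each natural bag is both a left and a right $\LapExp^*$ bag. Your added remarks---that $\sub(W_c;G,G_i)=W_i$, that the composition bag is independent of reading the $B_i$ as left or right bags, and that $L\j=\mathbf{0}$ together with $\j^TL=\mathbf{0}$ yields the Laplacian-like claim---are precisely the small checks needed to make the deduction rigorous.
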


Graham, Hoffman and Hosoya~\cite{Graham DM of a directed graph} showed a very attractive theorem, expressing the determinant of the distance matrix of a distance well-defined graph explicitly as a function of its blocks. Applying their result to the weighted cactoid digraph in Theorem~\ref{thm weighted cactoid digraph} and using Equations~(\ref{eqn det weighted directed cycle}) and (\ref{eqn cof weighted directed cycle}), we have
\begin{eqnarray}
\cof(W_c) & = & \prod\limits_{i=1}^r\cof(W_i) = \prod\limits_{i=1}^r(-1)^{n_i-1}w_i^{n_i-1}=(-1)^{n-1}\prod\limits_{i=1}^rw_i^{n_i-1},\label{eqn cof weighted cactoid digraph}\\
\det(W_c) & = & \sum\limits_{i=1}^r\det(W_i)\prod\limits_{j=1,j\neq i}^r\cof(W_j)=\cof(W_c)\sum\limits_{i=1}^r\frac{\det(W_i)}{\cof(W_i)}\nonumber\\
& = & \cof(W_c)\sum\limits_{i=1}^r\lambda_i=\lambda\cof(W_c)=(-1)^{n-1}\lambda\prod\limits_{i=1}^rw_i^{n_i-1}.\label{eqn det weighted cactoid digraph}
\end{eqnarray}
If we take each edge as a directed cycle of length two with each arc weight one, then Equation~(\ref{eqn det weighted cactoid digraph}) implies Graham and Pollak's formula~\cite{Graham Pollak on addressing problem LS} \begin{center}$\det(\D(T_n))=(-1)^{n-1}2^{n-2}(n-1),$\end{center} and Theorem~\ref{thm weighted cactoid digraph} implies the inverse of $\D(T_n)$~\cite{Bapat on DM and Lap,Graham DM Polynomials of trees}, where $\D(T_n)$ is the distance matrix of the tree $T_n$ on $n$ vertices.

Since $w_i$ is invertible for each $1\leqslant i\leqslant r$, the distance matrix $W_c$ is invertible if and only if $\lambda$ is invertible by Equation~(\ref{eqn det weighted cactoid digraph}). The above Theorem~\ref{thm weighted cactoid digraph} implies part of the results in~\cite{Bapat bidirected tree,Hou Chen Inverse of cactoid graph,ZhouDing2016DMweightedTree}.


\section{Classes of distance well-defined graphs}

Recall that a graph is distance well-defined if it is a connected undirected graph, a strongly connected directed graph or a strongly connected mixed graph. So the distance matrix of a distance well-defined graph is well-defined. Here we define several classes of distance well-defined graphs.

A distance well-defined graph $G$ is called distance invertible, if the distance matrix $\D(G)$ is invertible. Let $\DMI$ be the set of distance invertible graphs. A distance well-defined graph $G$ is modified left (or right) Laplacian expressible, if the distance matrix $\D(G)$ is a left (or right) $\LapExp^*(\lambda,\alpha,\beta,L)$ matrix with $\lambda\neq 0$ and $\j^T\beta=1$ (or $\alpha^T\j=1$). A distance well-defined graph $G$ is modified Laplacian expressible, if either it is a modified left Laplacian expressible graph or it is a modified right Laplacian expressible graph. Let $\LapExp^*(\mathcal{L})$ be the set of modified left Laplacian expressible graphs. Let $\LapExp^*(\mathcal{R})$ be the set of modified right Laplacian expressible graphs. Let $\LapExp^*$ be the set of modified Laplacian expressible graphs. The relations between these classes of distance well-defined graphs are as follows, \begin{center}$\LapExp^*(\mathcal{L})\cup\LapExp^*(\mathcal{R})= \LapExp^*\subseteq \DMI$.\end{center}

Let $r\geqslant 2$ and $n_i\geqslant 2$ for each $1\leqslant i\leqslant r$. Let $G$ be the complete multipartite graph $K_{n_1,n_2,\ldots,n_r}$. If we suppose \begin{center}$\#\{i\mid n_i=2, 1\leqslant i \leqslant r\}\leqslant 1$,\end{center} then the distance matrix $\D(G)$ is invertible by Corollary~2.5 in~\cite{Zhou2017DMdistance-wellDefined}; furthermore, by the discussion on page~22 in~\cite{Zhou2017DMdistance-wellDefined}, we know that the distance matrix $\D(G)$ is Laplacian expressible. So we have the following lemma.

\begin{lemma}
Let $r\geqslant 2$ and $n_i\geqslant 2$ for each $1\leqslant i\leqslant r$. Let $G$ be the complete multipartite graph $K_{n_1,n_2,\ldots,n_r}$. If $G\in \DMI$, then $G\in \LapExp^*$.
\end{lemma}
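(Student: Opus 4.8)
The plan is to reduce the claimed implication to two facts that the excerpt has already established: first, that the complete multipartite graph $K_{n_1,\ldots,n_r}$ under the hypothesis $\#\{i\mid n_i=2\}\leqslant 1$ has a Laplacian expressible distance matrix; and second, that a Laplacian expressible matrix is a special case of a modified Laplacian expressible matrix. So the real content is a short case analysis on how the hypothesis $G\in\DMI$ interacts with the combinatorial condition $\#\{i\mid n_i=2\}\leqslant 1$.

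First I would recall, citing Corollary~2.5 and the discussion on page~22 of~\cite{Zhou2017DMdistance-wellDefined} as the excerpt does, that if $\#\{i\mid n_i=2, 1\leqslant i\leqslant r\}\leqslant 1$ then $\D(G)$ is Laplacian expressible, i.e. $\D(G)$ is a left (equivalently right, by symmetry of $\D(G)$ here) $\LapExp(\lambda,\beta,L)$ matrix with $\beta^T\j=1$, $L\j=\mathbf 0$, $\beta^T\D(G)=\lambda\j^T$, $L\D(G)+I=\beta\j^T$. Taking $\alpha=\beta$ in Definition~\ref{def modified Laplacian expressible matrix}, this is precisely the statement that $\D(G)$ is a left $\LapExp^*(\lambda,\beta,\beta,L)$ matrix, and since it is also invertible (so $\lambda\neq 0$ by Lemma~\ref{lem inv of a matrix by lap}) and $\j^T\beta=\beta^T\j=1$, we get $G\in\LapExp^*(\mathcal L)\subseteq\LapExp^*$. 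Thus under the combinatorial condition the conclusion is immediate and does not even need the hypothesis $G\in\DMI$.

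It therefore remains to handle the complementary case $\#\{i\mid n_i=2, 1\leqslant i\leqslant r\}\geqslant 2$, and here I would argue that the hypothesis $G\in\DMI$ is in fact never satisfied, so the implication holds vacuously. Concretely: if $n_i=n_j=2$ for distinct $i,j$, then the two vertices in part $i$ have identical rows in $\D(G)$ — each is at distance $1$ from every vertex outside its own part and distance $2$ from its partner — and likewise the two vertices in part $j$; more to the point, with two parts of size $2$ one exhibits an explicit nonzero vector in the kernel of $\D(G)$ (the signed indicator supported on the four vertices of parts $i$ and $j$, with $+1$ on one vertex of each of the two parts and $-1$ on the other, say), using that each such vertex sees the four others at distances $(0,2,1,1)$ up to the obvious permutation. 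This forces $\det\D(G)=0$, contradicting $G\in\DMI$. Hence in this case the premise is false and there is nothing to prove.

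The main obstacle, and the one step I would be careful to get exactly right, is the kernel computation in the complementary case: one must check that with $\geqslant 2$ parts of size $2$ the distance matrix really is singular, and in particular confirm the distances $2$ within a size-$2$ part and $1$ across parts, and verify that the proposed $\pm1$ vector is annihilated by every row of $\D(G)$ (the rows indexed by vertices outside parts $i,j$ see the four support vertices at a constant distance $1$, so they contribute $+1-1+1-1=0$; the four rows indexed inside are handled by the $(0,2,1,1)$ pattern). Everything else is bookkeeping: translating ``Laplacian expressible'' into ``modified Laplacian expressible'' by setting $\alpha=\beta$, and invoking the already-proved inclusion $\LapExp^*(\mathcal L)\cup\LapExp^*(\mathcal R)=\LapExp^*\subseteq\DMI$ to place $G$ in the right class.
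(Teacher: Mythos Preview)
Your overall strategy matches the paper's: reduce to the characterization of invertibility for $K_{n_1,\ldots,n_r}$ via the condition $\#\{i\mid n_i=2\}\leqslant 1$, and then invoke the Laplacian expressibility established in~\cite{Zhou2017DMdistance-wellDefined}. The paper simply cites Corollary~2.5 of~\cite{Zhou2017DMdistance-wellDefined} for both directions of the invertibility criterion, whereas you attempt to supply the singularity argument in the complementary case by hand; that is a fine and more self-contained choice.

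However, the kernel vector you wrote down in Case~2 is wrong. With parts $i$ and $j$ of size $2$, say vertices $a,b$ in part $i$ and $c,d$ in part $j$, your vector $e_a-e_b+e_c-e_d$ is \emph{not} annihilated by $\D(G)$: the row of $a$ sees distances $(0,2,1,1)$ to $(a,b,c,d)$, and $0-2+1-1=-2\neq 0$. Your own check ``the four rows indexed inside are handled by the $(0,2,1,1)$ pattern'' does not actually go through with the signs $(+1,-1,+1,-1)$. The correct null vector puts the \emph{same} sign on the two vertices of a size-$2$ part and opposite signs across the two parts, namely $v=e_a+e_b-e_c-e_d$: then each inside row gives $0+2-1-1=0$ (or a permutation thereof), and each outside row gives $1+1-1-1=0$. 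With this correction your Case~2 argument is valid and the rest of the proof stands.
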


Hence the following question is interesting.
\begin{question}
Is there a distance well-defined graph $G\in \DMI\setminus \LapExp^*$?
\end{question}

Let $\LapExp_*=\LapExp^*(\mathcal{L})\cap\LapExp^*(\mathcal{R})$. By examples in~\cite{ZhouDing2017MixedBlockGraphs,Zhou2017DMdistance-wellDefined} and Section~\ref{section weighted cactoid digraph}, we get that $\LapExp_*\neq\emptyset$ and the following graphs are in $\LapExp_*$: the complete graph $K_m$ where $m\geqslant 2$, the odd cycle $C_{2k+1}$ where $k\geqslant 1$, the directed cycle $dC_t$ where $t\geqslant 2$, the complete multipartite graph $K_{n_1,n_2,\ldots,n_r}$ where $r\geqslant 2$, $n_i\geqslant 2$ for each $1\leqslant i\leqslant r$ and $\#\{i\mid n_i=2, 1\leqslant i \leqslant r\}\leqslant 1$, the mixed complete graph $\m K_s$ where $s\geqslant 3$, and the weighted directed cycle $\w C_n$ where $n\geqslant 2$ and the weights satisfying $\lambda=\frac{w^{(2)}}{w}$ is invertible (see Lemma~\ref{lem weighted directed cycle}). By Remark~\ref{remark D does not depent on D_i}, we have the following result.

\begin{corollary}\label{corollary graphs with known LapExp blocks}
Let $G$ be a distance well-defined graph with structure parameters $(G^n;G_1^{n_1},G_2^{n_2},\ldots,G_r^{n_r})$. Suppose for each $1\leqslant i\leqslant r$, the block $G_i$ is one of the following graphs: the complete graph $K_m$ where $m\geqslant 2$, the odd cycle $C_{2k+1}$ where $k\geqslant 1$, the directed cycle $dC_t$ where $t\geqslant 2$, the complete multipartite graph $K_{n_1,n_2,\ldots,n_r}$ where $r\geqslant 2$ and $n_i\geqslant 2$ for each $1\leqslant i\leqslant r$, the mixed complete graph $\m K_s$ where $s\geqslant 3$, and the weighted directed cycle $\w C_n$ where $n\geqslant 2$. Then the distance matrix of each $G_i$ is a left and right $\LapExp^*(\lambda_i,\alpha_i,\beta_i,L_i)$ matrix (may not invertible). We suppose $\lambda=\sum\limits_{i=1}^r\lambda_i$ is invertible. Then $G\in \LapExp_*$.
\end{corollary}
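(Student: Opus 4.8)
The plan is to obtain this corollary in exactly the same way as Theorem~\ref{thm weighted cactoid digraph}, now allowing the admissible blocks to be undirected, directed or mixed, and exploiting that each admissible block type carries a \emph{two-sided} $\LapExp^*$-bag. So the first step is to recall, for each of the listed block types --- the complete graph $K_m$, the odd cycle $C_{2k+1}$, the directed cycle $dC_t$, the complete multipartite graph $K_{n_1,n_2,\ldots,n_r}$, the mixed complete graph $\m K_s$, and the weighted directed cycle $\w C_n$ --- the explicit $\LapExp^*$-bag constructions recorded in~\cite{ZhouDing2017MixedBlockGraphs,Zhou2017DMdistance-wellDefined} and, for $\w C_n$, in Lemma~\ref{lem weighted directed cycle}: each furnishes parameters $(\lambda_i,\alpha_i,\beta_i,L_i)$ for which the distance matrix $D_i$ of $G_i$ is simultaneously a left and a right $\LapExp^*(\lambda_i,\alpha_i,\beta_i,L_i)$ matrix, i.e.\ $B_i=(D_i,\lambda_i,\alpha_i,\beta_i,L_i)$ is at once a left and a right $\LapExp^*$ $n_i$-bag. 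I would stress that these are purely algebraic identities, valid whether or not $\lambda_i$ is invertible (hence whether or not $D_i$ is singular).

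Next I would set $D:=\D(G)$ and form the composition bag $B=(D,\lambda,\alpha,\beta,L)$ of $B_1,\ldots,B_r$ as in Definition~\ref{def composition of bags}; here $\lambda=\sum_{i=1}^r\lambda_i$ and the remaining parameters are given by the stated formulas, independently of ``left'' versus ``right''. Applying Theorem~\ref{thm inverse of distance matrix by modified LapExp matrix} to the left structure of the $B_i$ gives that $B$ is a left $\LapExp^*$ $n$-bag, and applying the same theorem to the right structure gives that $B$ is a right $\LapExp^*$ $n$-bag. Reading off both conclusions,
\[
\alpha^T\j=1,\quad L\j=\mathbf{0},\quad \alpha^T D=\lambda\j^T,\quad LD+I=\beta\j^T,
\]
\[
\j^T\beta=1,\quad \j^T L=\mathbf{0},\quad D\beta=\lambda\j,\quad DL+I=\j\alpha^T .
\]

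Finally, assume $\lambda=\sum_{i=1}^r\lambda_i$ is invertible. By Lemma~\ref{lem inv of a matrix by lap}, $D$ is invertible with $D^{-1}=-L+\frac{1}{\lambda}\beta\alpha^T$. Since $D$ is a left $\LapExp^*(\lambda,\alpha,\beta,L)$ matrix with $\lambda$ invertible (in particular $\lambda\neq0$) and $\j^T\beta=1$, we get $G\in\LapExp^*(\mathcal{L})$; since $D$ is a right $\LapExp^*(\lambda,\alpha,\beta,L)$ matrix with $\lambda$ invertible and $\alpha^T\j=1$, we get $G\in\LapExp^*(\mathcal{R})$. Hence $G\in\LapExp^*(\mathcal{L})\cap\LapExp^*(\mathcal{R})=\LapExp_*$. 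Moreover, case~\ref{case Laplacian-like 3} of Lemma~\ref{lem Laplacian-like matrix}, applied with $\alpha^T D=\lambda\j^T$, $D\beta=\lambda\j$ and $\alpha^T\j=\j^T\beta=1$, shows that $L=-D^{-1}+\frac{1}{\lambda}\beta\alpha^T$ is Laplacian-like.

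I do not expect a genuine obstacle: the corollary is essentially a repackaging of Theorem~\ref{thm inverse of distance matrix by modified LapExp matrix} together with Remark~\ref{remark D does not depent on D_i}. The one point requiring care is the ``may not invertible'' clause in the first step --- one must never invoke invertibility of an individual $D_i$, since some $\lambda_i$ can fail to be invertible, and it is precisely Remark~\ref{remark D does not depent on D_i} that legitimizes deducing invertibility of $D=\D(G)$ from invertibility of the sum $\lambda$ alone. A secondary point is that the cited references must supply genuinely \emph{two-sided} bags for each listed family, so that both halves of the displayed relations hold; this is why the conclusion lands in the intersection $\LapExp_*$ rather than merely in $\LapExp^*$.
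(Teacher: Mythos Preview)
Your proposal is correct and follows exactly the route the paper intends: the paper offers no explicit proof but simply prefaces the corollary with ``By Remark~\ref{remark D does not depent on D_i}, we have the following result,'' and your write-up is a faithful unpacking of that remark together with Theorem~\ref{thm inverse of distance matrix by modified LapExp matrix} applied once on the left and once on the right.
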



Corollary~\ref{corollary graphs with known LapExp blocks} implies many known results. We want to find more examples of graphs in $\LapExp_*$. So the following questions are interesting.

\begin{question}
Is there a distance well-defined graph $G$ satisfying either
$G\in \LapExp^*(\mathcal{R})\setminus \LapExp^*(\mathcal{L})$, or $G\in \LapExp^*(\mathcal{L})\setminus \LapExp^*(\mathcal{R})$?
\end{question}

\begin{question}
How to characterize graphs in $\LapExp_*$, $\LapExp^*(\mathcal{L})$, $\LapExp^*(\mathcal{R})$, or $\LapExp^*$?
\end{question}


\section{Acknowledgement}

This work is supported by a project funded by China Postdoctoral Science Foundation under file number 2017M620491.

%



\end{document}